%% file: main_0113.tex
\documentclass[12pt,reqno]{amsart}

\usepackage[a4paper,margin=1in]{geometry}
\usepackage{amsmath,amssymb,amsthm,mathtools}
\usepackage{bbm}
\usepackage{enumitem}
\usepackage{hyperref}
\usepackage[normalem]{ulem}

\usepackage{pdflscape}
\usepackage{subcaption}
\usepackage{caption}
\usepackage{tikz, float, tikzscale}
\usepackage{pgfplots}
\pgfplotsset{compat=1.18}

\usepackage[T1]{fontenc}
\usepackage{libertinus,libertinust1math}

\theoremstyle{definition}
\newtheorem*{defi}{Definition}
\theoremstyle{plain}
\newtheorem{thm}{Theorem}
\newtheorem*{thm*}{Theorem}
\newtheorem{prop}{Proposition}
\newtheorem{lemma}{Lemma}

\newtheorem{coro}{Corollary}

\title{Prescribed distinct-digit growth in countable alphabets}
\author{Ying Wai Lee}

\begin{document}

\begin{abstract}
    The number of distinct symbols appearing in digit expansions generated by full-branch affine countable iterated function systems is studied whose branch weights are regularly varying. The Hausdorff dimensions of the exceptional sets in which the distinct-digit count grows at a positive linear rate or at a prescribed sublinear rate are determined. The resulting dimension laws exhibit a sharp phase transition: imposing any positive linear rate forces the dimension to collapse to a value determined solely by the tail index, whereas under a broad class of sublinear growth rates, the exceptional sets retain full Hausdorff dimension.
\end{abstract}

\maketitle

\section{Introduction}

Distinct-value statistics form a fundamental class of observables at the interface of probability, number theory, dynamical systems, and fractal geometry. Let $\Xi\coloneqq(\xi_n)_{n\in\mathbb{N}}$ be a symbolic process taking values in a countable alphabet, identified here with $\mathbb{N}$. A natural question is: how many \emph{distinct} symbols have appeared up to time $n$? Define the \emph{distinct-digit count} $D_n$ by, for any $n\in\mathbb{N}$,
\begin{align*}
    D_n\coloneqq \#\{\xi_1,\ldots,\xi_n\}.
\end{align*}

This single quantity has several classical interpretations. In probability, it is the number of \emph{occupied boxes} in an infinite-urn occupancy scheme after a given number of draws; in number-theoretic expansions, it is a measure of \textit{digit diversity}; in dynamical systems, it is a non-additive observable derived from a symbolic coding; in fractal geometry, prescribing its growth naturally leads to exceptional sets whose size is quantified by Hausdorff dimension.

The present study focuses on affine full-branch countable iterated function systems on the unit interval $[0,1)$: the interval is partitioned into countably many subintervals, each branch maps its subinterval affinely onto the whole interval, and one digit is produced at each step. Under Lebesgue measure, the digit process is independent and identically distributed, so distinct-digit count is simultaneously identified with an infinite-urn occupancy count and with a natural dynamical statistic for the associated interval map with its coding cylinder metric. The classical L\"uroth expansion is a distinguished example within this framework.

Under regularly varying (or heavy-tailed) digit weights, the Hausdorff dimensions of sets are determined where distinct-digit count grows atypically, both at positive linear growth and a broad class of prescribed sublinear growth. The resulting dimension law exhibits a sharp phase transition: sublinear prescriptions typically remain full-dimensional, while any positive linear rate forces a universal dimension drop governed solely by the tail index.

\subsection{Continued fractions}

Continued fraction expansions provide a number-theoretic setting in which a digit process on a countable alphabet is generated by a well-understood dynamical system. Classical work has emphasised magnitude-related phenomena for partial quotients (such as extremes and growth rates), and more recently longest-run statistics have also been studied \cite{LeeLongestRunPartialQuotients2026,TanZhou2025UniformDiophantineRunLength,WangWu2011MaxRunLength}. An equally natural, but less systematically explored, statistic concerns \emph{diversity}: at any given cut-off, how many distinct integer values have occurred among the partial quotients. Each first appearance of a new partial quotient can be viewed as genuinely new symbolic information. Unlike the classical infinite-urn occupancy model, continued-fraction digits are not independent; nevertheless, under Gauss measure, their long-range statistics are sufficiently regular to support a refined Hausdorff-dimension theory for level sets of distinct-digit count.

For any $x\in[0,1)\setminus\mathbb{Q}$, there exists a unique sequence of positive integers $(a_n)_{n\in\mathbb{N}}$ such that $x$ admits the continued fraction expansion:
\begin{align*}
    x=[a_1,a_2,a_3,\ldots]\coloneqq\frac{1}{\displaystyle a_1+\frac{1}{\displaystyle a_2+\frac{1}{a_3+\cdots}}},
\end{align*}
where for any $n\in\mathbb{N}$, $a_n\coloneqq a_n(x)\in\mathbb{N}$ is referred to as $n$-th partial quotient of $x$. The sequence of range functions $(R_n)_{n\in\mathbb{N}}$ is defined by, for any $n\in\mathbb{N}$ and $x\in[0,1)\setminus\mathbb{Q}$, 
\begin{align*}
    R_n(x)\coloneqq \#\left\{a_1(x),a_2(x),\ldots,a_n(x)\right\},
\end{align*}
which counts the number of distinct values among the first $n$ partial quotients of $x$. For example, one obtains $R_{30}(\pi-3)=10$ by observing the first 30 partial quotients of $\pi-3$:
\begin{align*}
    \pi-3
    =[\dotuline{7},\dotuline{15},\dotuline{1},\dotuline{292},1,1,1,\dotuline{2},1,\dotuline{3},1,\dotuline{14},2,1,1,2,2,2,2,1,\dotuline{84},2,1,1,15,3,\dotuline{13},1,\dotuline{4},2,\ldots],
\end{align*}
where the underlined terms represent the first occurrence of each distinct integer. 

A classical consequence of the metric theory of continued fractions is that the sequence of partial quotients is unbounded for Lebesgue almost every irrational. Equivalently, for Lebesgue almost every $x\in[0,1)\setminus\mathbb{Q}$, $\lim_{n\to+\infty}R_n(x)=+\infty$. A natural quantitative question is therefore the typical growth rate of the range functions. Wu--Xie \cite{WuXie2017RangeRenewalCF} initiated a systematic study of the range-renewal structure, and obtained an explicit almost-sure growth law with an effective convergence rate.
\begin{thm*}[Wu--Xie {\cite[Remark~2]{WuXie2017RangeRenewalCF}}]
    For Lebesgue almost every $x\in[0,1)\setminus\mathbb{Q}$, as $n\to+\infty$,
    \begin{align}
    \label{eq: Wu--Xie (2.9)}
        \frac{R_n(x)}{\sqrt{n}}
        =\sqrt{\frac{\pi}{\log 2}}+O\!\left(n^{-\lambda}\right),
    \end{align}
    where the exponent is given by:
    \begin{align*}
        \lambda\coloneqq\frac{1}{6(1+2\log{2})}=0.06984\ldots.
    \end{align*}
\end{thm*}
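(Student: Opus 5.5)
The growth law will follow from a second-moment computation for $R_n$, combined with effective mixing of the Gauss map and a Borel--Cantelli argument along a polynomially sparse subsequence. Write
\begin{align*}
    R_n(x)=\sum_{k\ge1}\mathbbm{1}\{\exists\, j\le n:\ a_j(x)=k\}
\end{align*}
and work under the Gauss measure $\mu$, which is equivalent to Lebesgue measure, so that almost-sure statements transfer. The digit distribution satisfies $p_k\coloneqq\mu(a_1=k)=\log_2\bigl(1+1/(k(k+2))\bigr)\sim 1/(k^2\log 2)$.

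\textbf{Step 1: the mean.} In the i.i.d.\ model with these $p_k$, the expected range is $\sum_k\bigl(1-(1-p_k)^n\bigr)$. Replacing this sum by the integral $\int_0^\infty\bigl(1-e^{-n/(t^2\log2)}\bigr)\,dt$, substituting $t=\sqrt{n/\log 2}\,s$, and using $\int_0^\infty(1-e^{-1/s^2})\,ds=\sqrt{\pi}$, gives $\sqrt{\pi n/\log 2}\,(1+O(n^{-1/2}))$. For the true Gauss measure I truncate. Small $k\le n^{\alpha}$ are seen by time $n$ except on a set of exponentially small measure, by $\psi$-mixing. Large $k\ge n^{\beta}$ contribute $O(n^{1-\beta})$ by a union bound. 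For the middle range I approximate $\mu(a_j\ne k\ \forall j\le n)$ by $(1-p_k)^n$ using the exponential $\psi$-mixing of the Gauss map ($\psi(m)\le C\rho^m$, $\rho=$ Wirsing constant $<1$). Concretely, I block the times $1,\dots,n$ into blocks of length $L$ separated by gaps of length $g\asymp\log n$, and treat the block-events as nearly independent, with multiplicative error $(1+C\rho^{g})^{n/L}$. This gives $\mathbb{E}_\mu R_n=\sqrt{\pi n/\log2}\,\bigl(1+O(n^{-\lambda_1})\bigr)$ for some $\lambda_1>0$.

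\textbf{Step 2: the variance.} I bound
\begin{align*}
    \operatorname{Var}_\mu R_n=\sum_{k,l}\operatorname{Cov}(\mathbbm{1}_{A_k},\mathbbm{1}_{A_l}),\qquad A_k\coloneqq\{k\text{ not seen by time }n\}.
\end{align*}
For independent sampling these covariances are negative, which gives $\operatorname{Var}=O(\sqrt n)$. Under the Gauss measure, the same blocking argument gives $\operatorname{Var}_\mu R_n=O(n^{1-2\lambda_2})$ with room to spare. Chebyshev then yields
\begin{align*}
    \mu\bigl(|R_n-\mathbb{E}R_n|>n^{1/2-\lambda}\bigr)\le C n^{-2\lambda_2+2\lambda}.
\end{align*}

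\textbf{Step 3: subsequence and interpolation.} I take $n_m=\lfloor m^{\gamma}\rfloor$ with $\gamma(2\lambda_2-2\lambda)>1$, so the exceptional probabilities are summable, and Borel--Cantelli applies along $(n_m)$. Monotonicity of $R_n$ in $n$ handles intermediate $n$, since $\sqrt{n_{m+1}}-\sqrt{n_m}\asymp m^{\gamma/2-1}$. This difference must be $O(n_m^{1/2-\lambda})$, which forces $\gamma\lambda\le1$. Balancing the two constraints on $\gamma$ against the mixing losses determines an explicit $\lambda$. The specific constant $1/(6(1+2\log2))$ presumably comes from the rate $\rho$, entering through the gap choice $g=c\log n$ with $c$ tied to $\log 2$.

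\textbf{Main obstacle.} The hard part will be the quantitative decorrelation in Steps 1--2. The events $A_k$ are non-local, since they involve all times up to $n$. I would first try the block/gap decomposition with $\psi$-mixing and then optimise the parameters $L$, $g$, the truncation exponents $\alpha,\beta$, and $\gamma$. A sharper $\lambda$ would require a transfer-operator (spectral-gap) perturbation of the generating function $\mathbb{E}_\mu\prod_{j\le n}\mathbbm{1}\{a_j\ne k\}$, but the crude blocking should already give some positive exponent of the stated form.
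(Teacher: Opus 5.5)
The paper does not prove this statement. It is quoted from Wu--Xie as background, so there is no in-paper argument to compare with, and your proposal has to stand on its own. The architecture is reasonable: pass to the Gauss measure, get the mean from the occupancy integral $\int_0^\infty(1-e^{-1/s^2})\,ds=\sqrt{\pi}$, then apply Chebyshev, Borel--Cantelli along $n_m=\lfloor m^\gamma\rfloor$, and monotone interpolation. Your Step~3 arithmetic is also right. The constraints $\gamma(2\lambda_2-2\lambda)>1$ and $\gamma\lambda\le 1$ are compatible exactly when $3\lambda<2\lambda_2$, so the scheme gives the rate $n^{-\lambda}$ whenever $\lambda\le\lambda_1$ and $\lambda<\tfrac23\lambda_2$.

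The genuine gap is that $\lambda_1$ and $\lambda_2$ are never quantified. Steps~1--2 carry the whole content of the theorem. Step~2 in particular, where the exponent is decided, is asserted (``the same blocking argument gives \ldots\ with room to spare'') rather than carried out. As written you obtain at best $R_n/\sqrt{n}=\sqrt{\pi/\log 2}+O(n^{-\lambda'})$ for some unspecified $\lambda'>0$. That does not imply the stated rate, and Step~3 ends without ever determining $\lambda$.

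To reach $\lambda=\frac{1}{6(1+2\log 2)}$ you need two explicit bounds: $|\mathbb{E}_\mu R_n-\sqrt{\pi n/\log 2}|=O(n^{1/2-\lambda})$ and $\operatorname{Var}_\mu R_n=O(n^{1-3\lambda-\epsilon})$ for some $\epsilon>0$, i.e.\ roughly $O(n^{0.79})$. For comparison, $\operatorname{Var}_\mu R_n=O(n^{1/2+\epsilon})$ for all $\epsilon>0$ together with a mean error $O(n^{1/3})$ would give every $\lambda<1/6$, and hence the statement.

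The variance bound is not automatic. The covariance sum runs over about $n^{2\beta}\gg n$ pairs $k,l\le n^{\beta}$ with $\beta>\frac12+\lambda$, so each per-pair error must carry its weight.
\begin{itemize}
\item If you bound the gap times crudely by $|G|(p_k+p_l)$ per pair, with $|G|\asymp ng/L$, the total is of order $n^{1+\beta}g/L$. This forces $L\gtrsim n^{\beta+3\lambda}\log n$.
\item With blocks that long, the within-block dependence has relative size $O(nLp_kp_l)$ after multiplying over the $n/L$ blocks. It sums to about $L$ even with the weights $e^{-n(p_k+p_l)}$ kept, which only just clears $n^{1-3\lambda}$.
\item Real room requires comparing $\mu(\text{a block sees both }k\text{ and }l)$ with the product of the one-digit probabilities to second order, via $\mu(a_i=k,a_j=l)=p_kp_l(1+O(\psi(|i-j|)))$.
\item Pairs with $k,l\gg\sqrt{n}$ should be treated through $\operatorname{Cov}(\mathbbm{1}_{A_k^c},\mathbbm{1}_{A_l^c})$.
\end{itemize}
None of this is in the proposal.

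Finally, your guess about the origin of the constant is inconsistent with your own scheme. With $g=A\log n$ and $A\log(1/\rho)$ large, $(1+C\rho^{g})^{n/L}-1$ becomes polynomially negligible at the price of logarithmic factors only. So the mixing rate $\rho$ cannot set a polynomial exponent. Within your framework the stated value corresponds to $\lambda_2=\frac{1}{4(1+2\log 2)}$, i.e.\ a variance exponent $1-\frac{1}{2(1+2\log 2)}$.
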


The Hausdorff dimensions of exceptional sets associated with growth constraints on the range function were also studied by Wu--Xie \cite{WuXie2017RangeRenewalCF}. Define, for any $\beta\geq 0$ and $c>0$, the growth rate level set $E(\beta,c)$ by:
\begin{align*}
    E(\beta,c)
    \coloneqq \left\{x\in[0,1)\setminus\mathbb{Q}: \lim_{n\to+\infty}\frac{R_n(x)}{n^\beta}=c\right\}.
\end{align*}

\begin{thm*}[Wu--Xie {\cite[Theorem~2 \& Remark~3]{WuXie2017RangeRenewalCF}}]\label{thm:WX-dim}
\leavevmode
\begin{enumerate}
    \item
    For any $0<\beta<1$ and $c>0$,
    \begin{align*}
        \dim E(\beta,c)=1.
    \end{align*}
    \item
    For any $0<c\leq 1$,
    \begin{align*}
        \dim E(1,c)=\frac12.
    \end{align*}
    \item
    For any non-decreasing and unbounded $\psi:\mathbb{N}\to\mathbb{R}^+$, if $\lim_{n\to+\infty}\psi(n)/n=0$ then:
    \begin{align*}
        \dim{\left\{x\in[0,1)\setminus\mathbb{Q}: 
        \lim_{n\to+\infty}\frac{R_n(x)}{\psi(n)}=1\right\}}=1.
    \end{align*}
    \end{enumerate}
\end{thm*}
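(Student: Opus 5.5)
We prove the three parts of the Wu--Xie dimension theorem by treating parts~(1) and~(3) together, since both concern sublinear growth, and handling the linear case~(2) separately. Part~(1) is the special case $\psi(n)=c\,n^\beta$ of part~(3).

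\emph{Full dimension under sublinear growth (parts (1) and (3)).} The upper bound $\dim\le 1$ is trivial, so only a lower bound is needed. For each $M\in\mathbb{N}$ let $F_M$ be the set of irrationals all of whose partial quotients lie in $\{1,\dots,M\}$; classically $\dim F_M\to 1$ as $M\to\infty$ (Jarník). We will embed a set of dimension at least $\dim F_M-\varepsilon$ into the target set. Fix $x\in F_M$ and define an index sequence $n_1<n_2<\cdots$ of density zero such that
\[
\#\{k:n_k\le n\}=\lfloor\psi(n)\rfloor-M+O(1).
\]
This is possible because $\psi$ is non-decreasing, unbounded and $o(n)$; we make $n_k$ sparse enough that the increments stay controlled. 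Define the map $x\mapsto y$ by inserting fresh digits: at each position $n_k$, place the digit $M+k$, which is new and not too large, and fill all remaining positions with the digits of $x$.

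For $y$ we then have $R_n(y)=\#\{\text{digits of }x\text{ used}\}+\#\{k:n_k\le n\}$. The first term is at most $M$, so $R_n(y)/\psi(n)\to 1$.

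The inserted digits $M+k$ appear at density zero. Their sizes satisfy $\log(M+k)\le\log\psi(n)=o(n)$, so the $n$-th level cylinder length of $y$ is $q_n(y)^{-2}$ with $\log q_n(y)=\log q_{n-\#}(x)+o(n)$. A standard H\"older argument, using that $x\mapsto y$ is a bijection onto its image with cylinder lengths distorted only by $e^{o(n)}$, gives $\dim\{y\}\ge\dim F_M$. A mass distribution on a Cantor subset is the cleanest way to make this rigorous. Letting $M\to\infty$ yields dimension $1$.

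The main care in this part is the case where $\psi$ grows fast, for example $n/\log n$. There $o(n)$ insertions still produce $o(n)$ digits of size at most $\psi(n)$, with total logarithmic cost $\psi(n)\log\psi(n)$. This is not $o(n)$ when $\psi(n)\log n\not\ll n$. To avoid this cost we reuse the original alphabet: choose the new digits as the smallest unused integers exceeding $M$. The $k$-th new digit is then $M+k\le M+\psi(n)$, and we need $\sum_{k\le\psi(n)}\log(M+k)\approx\psi(n)\log\psi(n)=o(n)$. This fails for $\psi(n)=n/\log\log n$. To handle it, we note that the dimension count only needs the cost to be $o(\log q_n)$ on a sequence realizing the liminf. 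Alternatively, and more robustly, we cover the large-digit gain with Good-type estimates: the cylinder set with digit $k$ has measure about $k^{-2}$, yet there are many choices. Concretely, we allow the inserted digit at step $k$ to range over a block $[M+k,2(M+k))$ whose values are arranged to be distinct and new. This gives $\asymp k$ choices at cost $k^{-2}$, which contributes a factor $k\cdot k^{-2s}$ with $s$ near $1$, that is, a loss of only $k^{1-2s}\approx k^{-1}$. That is the same order as before, so the cost must still be absorbed via $\psi(n)\log\psi(n)$. We expect this bookkeeping, which ensures the cost is negligible for all $o(n)$ rates, to be the main obstacle. Our plan is to first handle $\psi(n)\le n/\log^2 n$ directly, and then treat larger $\psi$ by distributing the insertions so that their lengths are compensated by the block choices.

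\emph{Linear growth (part (2)).} For the upper bound, $R_n(x)\ge cn$ forces at least $cn-o(n)$ distinct digits among the first $n$. The product $\prod_{i\le n}a_i$ is then at least the product of $cn$ distinct integers, which is at least $(cn)!\approx e^{cn\log n}$. So $q_n\ge e^{(c/2)\,n\log n}$ along the whole sequence, superexponentially. By the Łuczak / Feng--Wu--Liang--Tseng type result, the set where $a_n\to\infty$ fast in this averaged sense has dimension at most $1/2$. Concretely, we cover by cylinders of level $n$ with $\sum\prod a_i^{-2s}$ restricted to products over distinct-valued blocks. The sum $\sum\prod a_i^{-2s}$ converges for $s>1/2$ once the $\binom{n}{cn}$ combinatorial factor is absorbed by the superexponential decay.

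For the lower bound, given $0<c\le 1$, we construct a Cantor set. At times $n$ we place new digits $a_n\in[n^{2},2n^2]$ for a fraction $c$ of times, chosen distinct and increasing, and keep the digit $1$ elsewhere. Standard computations, as in Łuczak's proof that $\{a_n\ge a^{b^n}\}$ has dimension $1/(b+1)$, or more simply for polynomial growth, give dimension $1/2$ because the number of choices at level $n$ is about $n^2$ against length about $n^{-4}$. Then $R_n/n\to c$.
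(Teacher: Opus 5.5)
The paper does not prove this statement; it is quoted from Wu--Xie. The nearest comparison is with the paper's proofs of its affine analogues, Theorems~\ref{thm:linear} and~\ref{thm:sublinear}.

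\textbf{Parts (1) and (2).} These are sound in outline and follow those proofs. The upper bound in (2) is made rigorous as follows: on $\{q_n\ge e^{Bn}\}$ one has $q_n^{-2s}\le e^{-2(s-s')Bn}q_n^{-2s'}$, and $\sum_{a_1,\dots,a_n}q_n^{-2s'}\le\zeta(2s')^n$. This plays the role of the Hausdorff--Cantelli/tilting bound. Your Cantor set with new digits of size $\asymp n^2$ plays the role of the dyadic-alphabet block construction. You should drop ``increasing''. Requiring only that the new digit in $[n^2,2n^2]$ avoid the at most $n$ values already used leaves at least $n^2-n$ choices. Monotonicity, by contrast, can cut the range to $O(n)$ and spoil the exponent $1/2$. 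For (1), $\psi(n)=cn^\beta$ has increments tending to $0$ and satisfies $\psi(n)\log\psi(n)=O(n^\beta\log n)=o(n)$. So your insertion argument goes through directly. It is essentially the paper's proof of Theorem~\ref{thm:sublinear}, with a fixed bounded alphabet in place of the slowly growing $\{1,\dots,K_n\}$.

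\textbf{Part (3).} This has a gap that cannot be closed, because (3) is false under the stated hypotheses. Two of your steps fail.

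First, you claim one can choose $n_k$ with $\#\{k:n_k\le n\}=\lfloor\psi(n)\rfloor-M+O(1)$. This needs $\psi$ to have bounded increments. Since $R_{n+1}-R_n\in\{0,1\}$, for $\psi(n)=2^k$ on $[4^k,4^{k+1})$ the level set is empty.

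Second, the cost $\psi(n)\log\psi(n)$ you flagged is not bookkeeping but a genuine obstruction. If $R_n(x)\ge\psi(n)/2$ for large $n$, then $q_n(x)\ge\prod_{i\le n}a_i(x)\ge\lfloor\psi(n)/2\rfloor!$. Take $\psi(n)=n/\log\log n$ for large $n$ (constant for small $n$); this is non-decreasing, unbounded and $o(n)$. Then $\frac1n\log q_n(x)\to\infty$. The same covering you use in part (2) now gives $\mathcal{H}^s=0$ for every $s>1/2$, so this level set has dimension at most $1/2$, not $1$.

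Your proposed remedy actually confirms the obstruction. A new digit chosen from $\asymp k$ values at cost $\asymp k^{-2}$ has local ratio $1/2$, and here these positions carry almost all of $\log q_n$.

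What is true, and what your insertion argument does prove, is (3) under two extra hypotheses. One is $0\le\psi(n+1)-\psi(n)\le 1$, or any condition that lets a counting function with increments in $\{0,1\}$ track $\psi$ up to $o(\psi)$. The other is $\psi(n)\log\psi(n)=o(n)$. These are exactly the admissibility conditions the paper imposes in Theorem~\ref{thm:sublinear}.
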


These results exhibit a sharp \emph{dimension dichotomy}: a broad class of sublinear prescriptions yield full Hausdorff dimension, whereas any positive linear distinctness rate forces a universal drop to $1/2$. This naturally raises the question of how robust such a phase transition is for other digit systems, and which structural parameters determine the value of the dimension drop.

\subsection{Affine full-branch countable iterated function systems}

More generally, digit sequences may be viewed as symbolic codings arising from countable iterated function systems. Attention is restricted to a canonical \emph{affine, full-branch} subclass on the unit interval, for which the induced digit process is independent and identically distributed under Lebesgue measure. In this independent setting, the model may be regarded as a convenient prototype for distinct-value statistics.

Let $(p_k)_{k\in\mathbb N}$ be a probability sequence, that is, $\sum_{k\in\mathbb N}p_k=1$ and for any $k\in\mathbb{N}$, $p_k>0$. Let $(I_k)_{k\in\mathbb N}$ be a partition of $[0,1)$ into pairwise disjoint half-open intervals. Suppose for any $k\in\mathbb{N}$,
\begin{align*}
    \operatorname{diam}{I_k}=p_k.
\end{align*}
Define the associated family of affine contractions $(\varphi_k)_{k\in\mathbb N}$ by, for any $k\in\mathbb{N}$ and $u\in[0,1)$,
\begin{align*}
    \varphi_k(u)\coloneqq \inf I_k + p_k\,u
\end{align*}
Then for any $k\in\mathbb{N}$, $\varphi_k([0,1))=I_k$ and the images $(\varphi_k([0,1)))_{k\in\mathbb N}$ are pairwise disjoint by the half-open convention. Define the full-branch map $T:[0,1)\to[0,1)$ by, for any $k\in\mathbb{N}$ and $x\in I_k$,
\begin{align*}
    T(x)\coloneqq\frac{x-\inf I_k}{p_k}
\end{align*}
Thus, for any $k\in\mathbb{N}$, $T|_{I_k}={\varphi_k}^{-1}$ is the branch inverse, and $T$ maps the interval $I_k$ affinely onto $[0,1)$. Define, for any $n\in\mathbb{N}$, the digit function $d_n:[0,1)\to\mathbb{N}$ by, for any $k\in\mathbb{N}$ and $x\in T^{-(n-1)}(I_k)$,
\begin{align*}
    d_n(x)\coloneqq k.
\end{align*}
Then for any $x\in[0,1)$, $(d_n(x))_{n\in\mathbb N}$ is the coding (symbolic itinerary) of $x$ with respect to $(\varphi_k)_{k\in\mathbb N}$; that is, for any $x\in[0,1)$ and $n\in\mathbb{N}$, 
\begin{align*}
    x=\left(\varphi_{d_1(x)}\circ\varphi_{d_2(x)}\circ\cdots\circ\varphi_{d_n(x)}\right)\left(T^{n}x\right).
\end{align*}

Define, for $n\in\mathbb{N}$, the distinct-digit count function $D_n:[0,1)\to\mathbb{N}$ by, for any $x\in[0,1)$, 
\begin{align*}
    D_n(x)\coloneqq \#{\left\{d_1(x),d_2(x),\ldots,d_n(x)\right\}}.
\end{align*}
A key simplification of the affine full-branch setting (in contrast with continued fractions) is that under Lebesgue measure the digit process $(d_n)_{n\in\mathbb N}$ is independent and identically distributed \cite{DajaniKraaikamp1996ApproximationLuroth,JagerDeVroedt1969LurothErgodic}. In particular, for any $n\in\mathbb{N}$ and $k\in\mathbb{N}$,
\begin{align*}
    \mathbb{P}(d_n=k)=\operatorname{diam}{I_k}=p_k.
\end{align*}
Consequently, the count of distinct digits is equal to the number of occupied boxes in an infinite-urn occupancy scheme, with box probabilities determined by the branch weights. Sharp asymptotic results from occupancy theory are thus applicable, and serve as probabilistic input for the dimension results stated later. Within this framework, the classical L\"uroth expansion is taken as a guiding case.

\subsection{Classical L\"uroth expansions}

As a classical countable-alphabet expansion arising from a deterministic full-branch map, L\"uroth expansions have been studied extensively. Their metric theory has been developed from several perspectives, including digit-frequency and multifractal analysis \cite{Barbour2009UnivariateApproximations,Feng2023DimensionTO}, growth rate exceptional sets for digit processes \cite{FengTanZhou2021ExactDimensions,LinLiLou2022LargestDigits,TanZhang2020RelativeGrowth}, and Diophantine approximation by L\"uroth convergents \cite{CaoWuZhang2013Efficiency,HuangKalle2024GeneralisedAlphaLuroth,Lee2025LurothDA,TanZhou2021ApproximationProperties}. These results illustrate that, beyond probabilistic heuristics, L\"uroth digits exhibit rich structure typical of continued-fraction-type expansions.

For any $x\in(0,1]$, there exists a unique sequence of positive integers $(d_n)_{n\in\mathbb{N}}$ such that $x$  admits the classical L\"uroth expansion \cite{Luroth1883Expansion}:
\begin{align*}
    x=\sum_{n\in\mathbb{N}}\frac{1}{d_n\prod_{j=1}^{n-1}d_j(d_j-1)}=\frac{1}{d_1}+\frac{1}{d_1(d_1-1)d_2}+\frac{1}{d_1(d_1-1)d_2(d_2-1)d_3}+\cdots,
\end{align*}
where for any $n\in\mathbb{N}$, $d_n\coloneqq d_n(x)\in\mathbb{N}\setminus\{1\}$ is referred to as the $n$-th digit of $x$. Equivalently, one introduces the canonical partition $(J_k)_{k\in\mathbb N}$ of $(0,1]$ by, for any $k\in\mathbb{N}$,
\begin{align*}
    J_k=\left(\frac{1}{k+1},\frac{1}{k}\right],
\end{align*}
and defines the L\"uroth map $T:(0,1]\to(0,1]$ as associated full-branch map by, for any $k\in\mathbb{N}$ and $x\in J_k$,
\begin{align*}
    T(x)
    =\frac{x-\inf J_k}{\operatorname{diam}{J_k}}
    =k(k+1)x-k
\end{align*}
In Figure~\ref{fig: both maps}, the L\"uroth map plays an affine full-branch role analogous to the Gauss map in continued fractions.
\input{gauss_luroth_maps}

Define, for any $n\in\mathbb{N}$, the digit function $d_n:(0,1]\to\mathbb{N}\setminus\{1\}$ by, for any $k\in\mathbb{N}$ and $x\in T^{-(n-1)}J_k$,
\begin{align*}
    d_n(x)\coloneqq k+1.
\end{align*}
$(d_n)_{n\in\mathbb{N}}$ coincides with the digits in the classical L\"uroth expansion. Since Lebesgue measure coincides with interval length and the digit process $(d_n)_{n\in\mathbb N}$ is independent and identically distributed under Lebesgue measure, for any $n\in\mathbb{N}$ and $d\in\mathbb{N}\setminus\{1\}$,
\begin{align*}
    \mathbb P(d_n=d)=\operatorname{diam}{J_{d-1}}=\frac{1}{d(d-1)}.
\end{align*}

If one instead realises the system on $[0,1)$ by a half-open version of the partition, the coding differs only on a countable set of partition endpoints. The set has Lebesgue measure $0$ and Hausdorff dimension $0$; hence, the modification has no effect on statements concerning Lebesgue measure or Hausdorff dimension for level sets defined in terms of the distinct-digit count.

\subsection{Classical infinite-urn occupancy theory}

Although the digit sequence is generated by a deterministic full-branch map, an independent and identically distributed description of the digit process is valid under the reference measure. The digit process is thereby represented as an infinite-urn occupancy scheme, with box probabilities given by the branch weights, and the count of distinct digits is equal to the number of boxes that have been occupied after a given number of draws.

Since the coding under Lebesgue measure has the same distribution as independent and identically distributed draws, statements that hold with probability one in classical occupancy theory are transferred into Lebesgue-almost-everywhere statements on the unit interval. Through this identification, sharp asymptotic results from occupancy theory are imported to describe the typical growth of distinct-digit count, and these estimates are used as the probabilistic backbone for the Hausdorff dimension results established later.

The typical growth of the distinct-digit count is well understood in broad heavy-tailed regimes. Suppose, for the moment, there exist $C>0$ and $\rho>1$ such that:
\begin{align}\label{eq:pk-power-intro}
    \lim_{k\to+\infty}\frac{p_k}{k^{-\rho}}=C.
\end{align}
Since $(D_n)_{n\in\mathbb{N}}$ coincides with the number of occupied boxes in the associated infinite-urn scheme, results from classical occupancy theory \cite{GnedinHansenPitman2007Occupancy,Karlin1967InfiniteUrnCLT} yield the $n^{1/\rho}$ law: for Lebesgue almost every $x\in[0,1)$,
\begin{align*}
    \lim_{n\to\infty}\frac{D_n(x)}{n^{1/\rho}}
    =\Gamma\!\left(1-\frac{1}{\rho}\right) C^{1/\rho},
\end{align*}
where $\Gamma$ is the gamma function. Moreover, law-of-the-iterated-logarithm type refinements provide essentially optimal almost-sure fluctuation scales around this leading term; for instance, one deduces from \cite{BuraczewskiIksanovKotelnikova2025LIL} that for Lebesgue almost every $x\in[0,1)$, as $n\to+\infty$,
\begin{align*}
    \frac{D_n(x)}{n^{1/\rho}}
    =\Gamma\!\left(1-\frac{1}{\rho}\right) C^{1/\rho}
    +O\!\left(n^{-1/(2\rho)}\sqrt{\log\log n}\right).
\end{align*}

In the classical L\"uroth setting, one deduces that for Lebesgue almost every $x\in(0,1]$, as $n\to+\infty$,
\begin{align*}
    \frac{D_n(x)}{\sqrt{n}}=\sqrt{\pi}+O\!\left(n^{-1/4}\sqrt{\log\log n}\right).
\end{align*}
These almost-everywhere laws identify the typical scale of distinct-digit count, but they do not address the geometric size of the sets where the growth of the distinct-digit count is \emph{atypical}. Motivated by the continued-fraction results of Wu--Xie \cite{WuXie2017RangeRenewalCF} and by the occupancy interpretation above, the Hausdorff dimensions of exceptional sets specified by asymptotic prescribed growth constraints are determined in the sequel.

\section{Main Results} 

Building on the typical growth discussed previously, attention now turns to exceptional sets defined by prescribed asymptotic growth of the distinct-digit count. The analysis is carried out under a regularly varying tail assumption, which is more general than the pure power-law condition~\eqref{eq:pk-power-intro}.
\begin{defi}\label{def:RV}
Let $(p_k)_{k\in\mathbb{N}}$ be a probability sequence and $\rho\geq 1$. $(p_k)_{k\in\mathbb{N}}$ is said to be \emph{regularly varying} with tail index $\rho$ if: there exists a slowly varying function $L:\mathbb{N}\to\mathbb{R}^+$ such that
\begin{align}\label{eq:rv}
    \lim_{k\to+\infty}\frac{p_k}{k^{-\rho}L(k)}=1.
\end{align}
\end{defi}

Theorem~\ref{thm:linear} concerns linear distinctness rates.
Define, for any $0<\theta\leq1$, the growth rate level set $E_\theta$ at linear scale $\theta$ by:
\begin{align*}
    E_\theta
    &\coloneqq \left\{x\in[0,1):\lim_{n\to+\infty}\frac{D_n(x)}{n}= \theta\right\}.
\end{align*}

\begin{thm}\label{thm:linear}
Let $(p_k)_{k\in\mathbb{N}}$ be a probability sequence and $\rho\geq 1$.
Suppose $(p_k)_{k\in\mathbb{N}}$ is regularly varying with tail index $\rho$.
Then for any $0<\theta\leq 1$,
\begin{align*}
    \dim{E_\theta}
    =\frac{1}{\rho}.
\end{align*}
\end{thm}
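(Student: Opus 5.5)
Both bounds will be proved by direct covering and mass-distribution arguments on the cylinder structure. For a word $(k_1,\dots,k_n)\in\mathbb N^n$ the $n$-th level cylinder $I(k_1,\dots,k_n)$ is an interval of length $\prod_{j=1}^n p_{k_j}$. By \eqref{eq:rv} and Potter's bounds, for every $\varepsilon>0$ there is $C_\varepsilon$ with $C_\varepsilon^{-1}k^{-\rho-\varepsilon}\le p_k\le C_\varepsilon k^{-\rho+\varepsilon}$ for all $k$. Fix $0<\theta\le1$.

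\emph{Upper bound.} Let $x\in E_\theta$. For large $n$, $D_n(x)\ge(\theta-\delta)n$, so the first $n$ digits contain at least $(\theta-\delta)n$ distinct values. Among $m$ distinct positive integers, at most $m/2$ lie below $m/2$. Hence at least a $(\theta-\delta)/2$ proportion of the first $n$ digits are distinct and exceed $cn$ with $c=(\theta-\delta)/2$. The cover will be by cylinders in which the large digits are recorded only coarsely. For each $n$ we cover $E_\theta$ by the sets
\begin{align*}
\bigcup_{k_j\ge cn,\ j\in S} I(k_1,\dots,k_n),
\end{align*}
indexed by a subset $S\subset\{1,\dots,n\}$ with $\#S\ge c' n$, and by the digits outside $S$ truncated at level $n$ in some fashion. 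A cleaner way to organise this is to use the pressure-type sum
\begin{align*}
\sum_{\text{admissible words}}\operatorname{diam}I(k_1,\dots,k_n)^{s}\le \binom{n}{\#S}\Big(\sum_{k\ge1}p_k^{s}\Big)^{n-\#S}\Big(\sum_{k\ge cn}p_k^{s}\Big)^{\#S}.
\end{align*}
For $s>1/\rho$ (with $\varepsilon$ small) we have $\sum_k p_k^s<\infty$ and $\sum_{k\ge cn}p_k^s\lesssim n^{1-s(\rho-\varepsilon)}$, which decays polynomially. Since $\#S\ge c'n$, the tail factor contributes $n^{-\eta c' n}$ with $\eta>0$. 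This is superexponentially small and beats $2^n(\sum p_k^s)^n$. The union over $n\ge N$ of these covers therefore has $s$-dimensional cost tending to $0$, giving $\dim E_\theta\le s$, and hence $\dim E_\theta\le1/\rho$. One subtlety is that $\sum_k p_k^s$ might be large. This is harmless because the superexponential decay dominates any exponential factor.

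\emph{Lower bound.} Take $s<1/\rho$ and build a Cantor subset $F\subset E_\theta$ with $\dim F\ge s-o(1)$. Choose a large integer $M$ and a fast-growing block structure. At time $n$ the digit $d_n$ is forced to be a new integer from a window $[N_n,2N_n)$ at "fresh" positions. These fresh positions form a deterministic set of density $\theta$, and the windows are disjoint and increasing, with $N_n$ growing like $n^{A}$ for large $A$. At the remaining positions $d_n=1$. The exception is $\theta=1$, where every position is fresh. Then $D_n/n\to\theta$ for every point of $F$. At a fresh position there are $\asymp N_n$ choices, each of length $\asymp N_n^{-\rho}L(N_n)$. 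Define the uniform measure $\mu$ on $F$. For a level-$n$ cylinder $J$ we get $\log\mu(J)\approx-\sum_{\text{fresh }j\le n}\log N_j$, while $\log|J|\approx-\rho\sum_{\text{fresh }j\le n}\log N_j+O(n)$. The $O(n)$ term comes from the positions with digit $1$ and is negligible because $\log N_j\asymp A\log j\to\infty$. So the local ratio tends to $1/\rho$.

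\emph{Main obstacle.} The hardest step is the mass distribution principle for balls that do not align with cylinders. A ball of radius $r$ with $|J_{n+1}|\le r<|J_n|$ can meet many level-$(n+1)$ subcylinders, which are clustered in a window of total length $\asymp |J_n|\sum_{k\in[N,2N)}p_k\asymp|J_n|N^{1-\rho}L(N)$. I will handle this with the standard two-case estimate. The ball meets at most $r/(|J_n|N^{-\rho}L)+2$ children, each of mass $\mu(J_n)/N$. In the first case this gives $\mu(B)\lesssim\mu(J_n)\min\{1,r/(|J_n|N^{1-\rho})\}$. Then I interpolate, using $\min\{1,t\}\le t^{s}$ and the fact that $\mu(J_n)\le|J_n|^{s-o(1)}$ and $\mu(J_n)/N\le |J_{n+1}|^{s-o(1)}$. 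The slowly varying factors are absorbed by Potter's bounds. Letting $s\uparrow1/\rho$ completes the proof.
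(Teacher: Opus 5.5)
Your proposal is correct. The upper bound is the paper's argument. Lemma~\ref{lem:distinctforceslarge} forces linearly many large digits, Lemma~\ref{lem:tailpk} bounds $\sum_{k\ge M}p_k^s$, and the resulting $\exp(-c\,n\log n)$ decay beats $(\sum_k p_k^s)^n$. The paper's tilted measure with binomial tail $\binom{n}{r}q_{\ge r}^{\,r}$ amounts to your union bound over $S$.

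The lower bound is a genuinely different construction.
\begin{itemize}
\item The paper concatenates dyadic blocks of length $2^j$ over $[N_j,2N_j)$ with $N_j\asymp 2^j$, and counts admissible blocks exactly. As a result every position carries entropy of order $\log N_j$, including the repeats, which choose among the $\approx\theta t$ values already seen in the block.
\item You let only the fresh positions carry entropy, and pay a bounded cost $\log(1/p_1)$ at each filler position. This cost is negligible since $\log N_n\to\infty$. Your route removes the counting of $\mathcal{B}_j$ and yields an essentially product measure on digit-level cylinders whose branching satisfies $\log N_{n+1}=o(\log(1/|J_n|))$.
\end{itemize}
That growth condition is what actually closes your interpolation, so state it. You need $\mu(J_n)\lesssim\bigl(|J_n|N_{n+1}^{1-\rho}\bigr)^{s}$, up to slowly varying factors. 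This follows from the local ratio tending to $1/\rho$ together with the growth condition, not from the two inequalities you quote alone.

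The growth condition is also where your route is sturdier than the paper's. In Proposition~\ref{prop: thm 1 prop 2.5}, one block level multiplies the branching by a factor comparable, on a log scale, to the whole prefix. The final chain of inequalities there keeps boundary cylinders only from level $J_u$ on. It drops the levels at which a boundary cylinder longer than $I$ has many children inside $I$. These are exactly the scales you flag as the main obstacle; in the block setting they would have to be handled digit by digit inside blocks.

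Three small repairs are needed.
\begin{itemize}
\item Windows $[N_n,2N_n)$ with $N_n\asymp n^A$ cannot be pairwise disjoint. Either require only that the fresh digit be new (at most $n$ of the $N_n$ values are excluded), or let $N_n$ grow exponentially, which keeps the growth condition.
\item The intervals $I_k$ need not be arranged monotonically in $[0,1)$, so the children are not literally clustered. Your bound on the number of children met by the ball does not use clustering, however.
\item A ball can meet several level-$n$ cylinders. Run the estimate along the two boundary chains of the interval, as the paper does. Let $W_m$ be the total length of the children at level $m$. The per-level bounds $\min\{W_m^{s},\,r^{s}(r/W_m)^{1-s}\}$ then sum to $O(r^{s})$, because $W_m$ decreases geometrically along each chain.
\end{itemize}
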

Thus, any positive linear rate forces a strict drop from full dimension (unless the tail index is 1), and the value is universal at the linear scale and determined solely by the tail index. In the classical L\"uroth setting, one obtains that for any $0<\theta\leq1$, $\dim{E_\theta}=1/2$, mirroring the continued-fraction phenomenon of Wu--Xie but in an independent and identically distributed digit setting.

The lower bound in Theorem~\ref{thm:linear} is proved by constructing a probability measure supported on a suitable subset and applying the mass distribution principle, with cylinder estimates derived from regular variation. The upper bound follows from a Hausdorff--Cantelli type covering argument combined with an appropriate change-of-measure (tilting) device for regularly varying tails.

Theorem~\ref{thm:sublinear} concerns a broad class of prescribed sublinear distinctness rates.
\begin{defi}\label{def:admissible}
Let $f:\mathbb{N}\to\mathbb{R}^+$ be a function. $f$ is said to be \emph{admissible} if:
\begin{itemize}
    \item $f$ is unbounded;
    \item $\lim_{n\to+\infty}f(n)\log{f(n)}/n=0$; and
    \item for any $n\in\mathbb{N}$, $0\leq f(n+1)-f(n)\leq1$.
\end{itemize}
\end{defi}

Define, for any admissible function $f$,
\begin{align*}
    E_f\coloneqq \left\{x\in[0,1): \lim_{n\to+\infty}\frac{D_n(x)}{f(n)}=1\right\}.
\end{align*}

\begin{thm}\label{thm:sublinear}
Let $(p_k)_{k\in\mathbb{N}}$ be a probability sequence and $\rho\geq 1$.
Suppose $(p_k)_{k\in\mathbb{N}}$ is regularly varying with tail index $\rho$.
Then for any admissible function $f$, 
\begin{align*}
    \dim{E_f}=1.
\end{align*}
\end{thm}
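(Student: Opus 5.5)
Since $E_f\subseteq[0,1)$, the upper bound $\dim E_f\le 1$ is trivial, and the task is the lower bound $\dim E_f\ge 1$. The plan is to fix $\varepsilon>0$ and a large finite alphabet $\{1,\dots,M\}$, and to build a Cantor-type subset of $E_f$ of dimension at least $s_M-\varepsilon$. Here $s_M$ is the dimension of the restricted system $\{x: d_n(x)\le M \text{ for all } n\}$, given by $\sum_{k\le M}p_k^{s_M}=1$. Because $\sum_k p_k=1$, we have $s_M\to1$ as $M\to\infty$.

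The construction interleaves \emph{free} positions, where any digit in $\{1,\dots,M\}$ is allowed, with a sparse set of \emph{marked} positions, where new large digits are forced. Since $f$ is nondecreasing with increments at most $1$ and is unbounded, define the marked times by $n_j\coloneqq\min\{n: f(n)\ge j\}$ for $j>M$. These are strictly increasing, because each step raises $f$ by at most $1$. At time $n_j$ we force the digit $d_{n_j}=M+j$, which has never appeared before. At every other time the digit ranges freely over $\{1,\dots,M\}$. Then $D_n=\#(\text{small digits used})+\#\{j:n_j\le n\}$, which lies in $[\lfloor f(n)\rfloor - M,\ f(n)+M]$. Since $f$ is unbounded, this gives $D_n/f(n)\to1$, so the set $F$ of such points lies in $E_f$.

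For the lower bound on $\dim F$, put the natural Bernoulli-type measure $\mu$ on $F$: at free positions it assigns weight $p_k^{s_M}$ to digit $k$, and at marked positions it assigns full mass to the forced digit. The cylinder of length $n$ has diameter $\prod_{i\le n}p_{d_i}$. Its $\mu$-mass equals the product of $p_{d_i}^{s_M}$ over the free indices, so
\[
\frac{\log\mu(I_n)}{\log|I_n|}\ \ge\ s_M\cdot\frac{\sum_{\text{free}}\log p_{d_i}}{\sum_{\text{free}}\log p_{d_i}+\sum_{j:n_j\le n}\log p_{M+j}}.
\]
By regular variation, $-\log p_{M+j}\sim\rho\log j$. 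The marked sum is therefore at most $C\,f(n)\log f(n)$, since the number of marked times up to $n$ is at most $f(n)$. Meanwhile the free sum is at least $c\,(n-f(n))$ in absolute value, because $p_k\le\max_k p_k<1$ (there are at least two digits). Admissibility gives $f(n)\log f(n)=o(n)$, so the ratio tends to $s_M$. Passing from cylinders to balls needs an extra step. Using the fact that $F$ is built from consecutive subintervals inside each cylinder, a ball of radius $r$ meets at most boundedly many cylinders of level $n$ with comparable length, up to the factor $\min_{k\le M}p_k$ or $p_{M+j}$ at the next step. The marked ratios $p_{M+j}$ cost at most a subexponential factor for the same reason as above. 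The mass distribution principle then gives $\dim F\ge s_M-\varepsilon$. Letting $M\to\infty$ yields $\dim E_f=1$.

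The main obstacle is the ball-versus-cylinder comparison at the marked levels. There, a level-$n$ cylinder contains the next-level cylinder with the tiny ratio $p_{M+j}$, so a ball may be much smaller than the parent cylinder yet larger than the child. I would handle this by noting that, inside the parent, $F$ occupies only the single child. A ball of radius $r$ between the two sizes then has mass at most $\mu(\text{child})\le|\text{child}|^{s_M-\varepsilon}\le r^{s_M-\varepsilon}$. This bound uses the estimate $\log\mu/\log|I|\to s_M$ established above, which relied exactly on $f\log f=o(n)$.
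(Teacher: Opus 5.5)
Your argument is correct, and its core is the paper's construction. Free positions carry small digits with weights $p_k^{s}$. Fresh large digits are forced at the increment times of $f$. The condition $f(n)\log f(n)=o(n)$ makes the forced factors negligible against the $e^{-cn}$ coming from the free positions, and the mass distribution principle is applied with $s\to 1$.

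Two differences are worth noting. First, the paper lets the free alphabet grow ($K_n\le\sqrt{f(n)}$), which buys nothing; your fixed $M$ suffices. Because you bound $-\log p_{M+j}$ directly from regular variation, you also avoid the paper's relabelling of $(p_k)$ into non-increasing order.

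Second, and more substantively, you pass from cylinders to intervals differently. The paper fixes $t$ strictly below the exponent and proves $\mu_t(C_n)\le A_t e^{-c_t n}(\operatorname{diam}C_n)^t$. It then splits an interval $I$ into the maximal cylinders inside $I$, at most $2(K_n+1)$ at level $n$, and uses only $\operatorname{diam}C\le\operatorname{diam}I$. The exponential factor pays for summing over all levels, so the tiny ratios $p_{M+j}$ at marked levels never need special care.

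Your route uses the local bound $\mu(C)\le|C|^{s_M-\varepsilon}$ together with bounded multiplicity. This must be stated for the variable-level Moran cover $\{C_\omega:|C_\omega|\le r<|C_{\omega^-}|\}$, where $\omega^-$ is the parent word. It cannot be stated for the cylinders of a single level, since level-$n$ cylinders of $F$ can differ in length by factors exponential in $n$. For the Moran cover it works:
\begin{itemize}
\item Members ending at a free position have length at least $r\min_{k\le M}p_k$, so at most $1/\min_{k\le M}p_k+2$ of them meet an interval of length $r$.
\item Members ending at a marked position have parents of length $>r$. These parents are pairwise disjoint exactly because a marked-level parent has a single child in $F$, so at most two such members meet the interval.
\end{itemize}
This is your ``single child'' observation made precise. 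The ``consecutive subintervals'' premise is not needed, and it need not hold for a general partition.

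Finally, a cosmetic fix: if $f(1)\ge M+2$, then several $n_j$ equal $1$, so start the marking at $j>\max\{M,f(1)\}$.
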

Thus, a broad class of prescribed sublinear distinctness rates retains full Hausdorff dimension, again paralleling the Wu--Xie phenomenon but in an independent and identically distributed digit setting. The proof of the full-dimensional phenomenon in Theorem~\ref{thm:sublinear} proceeds by an argument similar to that applied for the lower bound in Theorem~\ref{thm:linear}.

Corollary~\ref{cor:poly-phase} concerns sublinear polynomial prescriptions of the distinctness rate and the vanishing linear-rate set.
\begin{coro}\label{cor:poly-phase}
Let $(p_k)_{k\in\mathbb{N}}$ be a probability sequence and $\rho\geq 1$.
Suppose $(p_k)_{k\in\mathbb{N}}$ is regularly varying with tail index $\rho$.
Then:
\begin{itemize}
    \item for any $0<\beta<1$ and $c>0$, 
    \begin{align*}
        \dim{\left\{x\in[0,1):\lim_{n\to+\infty}\frac{D_n(x)}{n^\beta}=c\right\}}
        =1;
    \end{align*}
    \item and:
    \begin{align*}
        \dim{\left\{x\in[0,1):\lim_{n\to+\infty}\frac{D_n(x)}{n}=0\right\}}
        =1.
    \end{align*}
\end{itemize}
\end{coro}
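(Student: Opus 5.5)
Both items of Corollary~\ref{cor:poly-phase} are obtained from Theorem~\ref{thm:sublinear}. The first item follows by checking admissibility of $f(n)=cn^\beta$. The second follows by exhibiting one admissible $f$ whose level set is contained in the vanishing-rate set. Throughout, the upper bound $\dim\le 1$ is trivial because everything lives in $[0,1)$. So only lower bounds are needed, and these come from monotonicity of Hausdorff dimension under inclusion.

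For the first item, fix $0<\beta<1$ and $c>0$, and set $f(n)\coloneqq cn^\beta$.
\begin{itemize}
    \item $f$ is unbounded.
    \item $f(n)\log f(n)/n = c n^{\beta-1}(\log c+\beta\log n)\to 0$, since $\beta<1$.
    \item $f$ is nondecreasing.
\end{itemize}
The only issue is the Lipschitz-type condition $f(n+1)-f(n)\le 1$. By the mean value theorem, $f(n+1)-f(n)\le c\beta n^{\beta-1}$, which is at most $1$ once $n\ge n_0\coloneqq\lceil (c\beta)^{1/(1-\beta)}\rceil$. For small $n$ the increment may exceed $1$ when $c$ is large. I will therefore modify $f$ on the finitely many indices $n<n_0$. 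One choice is $\tilde f(n)\coloneqq \max\{f(n_0)-(n_0-n),\,0\}$ for $n<n_0$ (replaced by a small positive value where needed to keep $\tilde f$ positive), and $\tilde f=f$ for $n\ge n_0$. This is nondecreasing with increments at most $1$, and it agrees with $f$ eventually. Hence $\tilde f$ is admissible, and $E_{\tilde f}$ equals the set $\{\lim D_n/n^\beta=c\}$, because the defining limit only depends on the tail. Theorem~\ref{thm:sublinear} then gives dimension $1$.

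For the second item, take, for instance, $f(n)\coloneqq\sqrt n$, or the $\beta=1/2$, $c=1$ case just treated. It is admissible after the same harmless finite modification; in fact $\sqrt{n+1}-\sqrt n\le 1$ already holds. Every $x\in E_f$ satisfies $D_n(x)/n=(D_n(x)/\sqrt n)\cdot n^{-1/2}\to 0$, so $E_f\subseteq\{x:\lim D_n(x)/n=0\}$. Therefore the latter set has dimension at least $\dim E_f=1$, hence exactly $1$.

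The only mildly delicate point is the finite-index adjustment that enforces the increment bound $f(n+1)-f(n)\le1$ for large $c$, together with the remark that $E_f$ depends only on the tail of $f$. Neither requires more than the observations above, so I expect no genuine obstacle.
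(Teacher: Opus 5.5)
Your proposal is correct and follows the paper's route. You take $f(n)=cn^\beta$ with a finite modification enforcing $0\le f(n+1)-f(n)\le 1$, which leaves the level set unchanged, apply Theorem~\ref{thm:sublinear}, and get the vanishing linear-rate case from the inclusion of $E_f$ with $f(n)=\sqrt{n}$ into $\{x:\lim_{n\to+\infty}D_n(x)/n=0\}$; your explicit $n_0=\lceil (c\beta)^{1/(1-\beta)}\rceil$ and modified $\tilde f$ just make precise the ``finite modification if needed'' that the paper leaves implicit.
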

Thus, the sets corresponding to sublinear polynomial prescriptions of the distinctness rate, as well as the vanishing linear-rate set, have full Hausdorff dimension. Corollary~\ref{cor:poly-phase} follows immediately by choosing admissible functions corresponding to the prescribed growth rates (with a finite modification if needed), applying Theorem~\ref{thm:sublinear} and set inclusions.

\section{Proof of Theorem~\ref{thm:linear}}

Let $0<\theta\leq1$. Define
\begin{align*}
    E_{\geq\theta,+}
    \coloneqq \left\{x\in[0,1):\limsup_{n\to+\infty}\frac{D_n(x)}{n}\geq \theta\right\}.
\end{align*}
Note that $E_\theta\subset E_{\geq\theta,+}$. To prove Theorem~\ref{thm:linear}, it suffices to prove the following double inequalities:
\begin{align*}
    \dim{E_{\geq\theta,+}}\leq \frac{1}{\rho} \leq\dim{E_\theta}.
\end{align*}

\subsection{Block concatenation construction}

The lower bound is obtained by constructing a subset via a block concatenation scheme, defining a probability measure supported on it, and then applying the mass distribution principle.

The following lemma is a standard consequence regarding dyadic blocks. Proofs may also be found in references on regular variation.
\begin{lemma}\label{lem:potter}
Let $(p_k)_{k\in\mathbb{N}}$ be a probability sequence and $\rho\geq 1$.
Suppose $(p_k)_{k\in\mathbb{N}}$ is regularly varying with tail index $\rho$.
Then there exists a slowly varying function $L:\mathbb{N}\to\mathbb{R}^+$ such that for any $\varepsilon>0$, there exist $k_{\varepsilon}\in\mathbb{N}$ and $C_\varepsilon\geq 1$ such that for any $k,m\in\mathbb{N}$, if $k_{\varepsilon}\leq k\leq m<2k$ then:
\begin{align*}
    \frac{p_m}{p_k}
    \geq\frac{1}{2^{\rho+\varepsilon}C_\varepsilon}, &&
    p_k\geq\frac{k^{-\rho}L(k)}{2}
    .
\end{align*}
\end{lemma}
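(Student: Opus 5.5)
We take $L$ to be the slowly varying function supplied by Definition~\ref{def:RV}, so that $p_k/(k^{-\rho}L(k))\to1$. The second inequality is then immediate. Choose $k_0$ with $p_k\geq \tfrac12 k^{-\rho}L(k)$ for all $k\geq k_0$.

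For the first inequality, write
\begin{align*}
    \frac{p_m}{p_k}=\frac{p_m}{m^{-\rho}L(m)}\cdot\frac{k^{-\rho}L(k)}{p_k}\cdot\Big(\frac{m}{k}\Big)^{-\rho}\cdot\frac{L(m)}{L(k)}
\end{align*}
and bound the four factors separately for $k\leq m<2k$.

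\begin{itemize}
    \item The first two factors tend to $1$. So for $k,m\geq k_0$ their product is at least $\tfrac12\cdot\tfrac23$, say, after enlarging $k_0$ so that each ratio lies in $[\tfrac12,\tfrac32]$.
    \item The third factor is at least $2^{-\rho}$, since $m/k<2$.
    \item The fourth factor is handled by the uniform convergence theorem for slowly varying functions, or equivalently Potter's bound. Since $L$ is only defined on $\mathbb{N}$, first extend it to $[1,\infty)$ by linear interpolation (or piecewise constancy) and check that the extension is still slowly varying. The UCT then gives $\sup_{\lambda\in[1,2]}|L(\lambda k)/L(k)-1|\to0$ as $k\to\infty$. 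Hence $L(m)/L(k)\geq\tfrac12$ for all large $k$ and $k\leq m<2k$.
\end{itemize}

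Combining the bounds, $p_m/p_k\geq 2^{-\rho}/6\geq 1/(2^{\rho+\varepsilon}C_\varepsilon)$ with $C_\varepsilon=6$, for all $k\geq k_\varepsilon$. The $\varepsilon$ is harmless slack; alternatively, Potter's bound $L(m)/L(k)\geq A^{-1}(m/k)^{-\varepsilon}$ yields the stated form directly.

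The main obstacle is justifying the uniformity of $L(m)/L(k)$ over $m\in[k,2k)$. Slow variation alone only controls $L(\lambda k)/L(k)$ for each fixed $\lambda$. The uniformity requires the UCT, which in turn needs measurability of the extension; a step or interpolated extension of a sequence satisfies this.

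A cleaner route is available if we avoid the discrete extension altogether. Replace $L$ by the Karamata representation $L(x)=c(x)\exp\int_1^x \eta(t)\,dt/t$ with $c(x)\to c>0$ and $\eta\to0$. This is permitted because the lemma only asserts the existence of some slowly varying $L$, and the representation gives the dyadic-block ratio bound directly.
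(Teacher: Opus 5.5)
Your argument is correct and is essentially the paper's. Both use the factorisation $p_m/p_k=(m/k)^{-\rho}\,\frac{L(m)}{L(k)}\,\frac{a_m}{a_k}$ with $a_n=p_n/(n^{-\rho}L(n))\in[\tfrac12,\tfrac32]$ for large $n$; the paper bounds $L(m)/L(k)$ by Potter's bound (giving $C_\varepsilon=3D_\varepsilon$), where you use the uniform convergence theorem (giving $C_\varepsilon=6$).

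Your concern about passing from $L$ on $\mathbb{N}$ to a function on $[1,\infty)$ addresses a point the paper silently skips. Slow variation of the step extension is a genuine theorem on regularly varying sequences (Galambos--Seneta), not a routine check, unless $L$ is understood from the outset as the restriction of a slowly varying function on the reals.
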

\begin{proof}
    By the definition of regularly varying, there exists a slowly varying function $L:\mathbb{N}\to\mathbb{R}^+$ such that~\eqref{eq:rv} holds.
    Pick any $\varepsilon>0$.
    By the Potter bound of slowly varying function \cite[Theorem 1.5.6]{BinghamGoldieTeugels1987}, there exists $ k_{\varepsilon,1}\in\mathbb{N}$ and $D_\varepsilon>0$ such that for any $k,m\in\mathbb{N}$, if $k_{\varepsilon,1}\leq k\leq m<2k$ then:
    \begin{align*}
        \frac{L(m)}{L(k)}
        \geq\frac{1}{D_\varepsilon}\left(\frac{m}{k}\right)^{-\varepsilon}
        \geq\frac{1}{D_\varepsilon2^{\varepsilon}}.
    \end{align*}
    Define, for any $n\in\mathbb{N}$, $a_n\coloneqq p_n /(n^{-\rho}L(n))$.
    By regular variation~\eqref{eq:rv}, there exists $k_0\in\mathbb{N}$ such that for any $n\in\mathbb{N}$, if $n\geq k_0$ then $1/2\leq a_n\leq 3/2$. Hence, for any $k,m\in\mathbb{N}$, if $\max{\{k_{\varepsilon,1},k_0\}}\leq k\leq m<2k$ then:
    \begin{align*}
        \frac{p_m}{p_k}=\left(\frac{m}{k}\right)^{-\rho}\frac{L(m)}{L(k)}\frac{a_m}{a_k}\geq\frac{1}{3D_\varepsilon2^{\rho+\varepsilon}},
    \end{align*}
    and the desired lower bound follows.
\end{proof}

Define the increasing block length sequence $(L_j)_{j\in\mathbb{N}}$ by, for any $j\in\mathbb{N}$, $L_j\coloneqq 2^j$. 
Define, for any $j\in\mathbb{N}$, $m_j\coloneqq \lceil \theta L_j\rceil\leq L_j$ and $N_j\coloneqq 2^{j-1}\max{\{m_1,k_1\}}\in\mathbb{N}$, where $k_1$ is given in Lemma~\ref{lem:potter} by taking $\varepsilon=1$. 
Define the dyadic alphabets $(\mathcal{A}_j)_{j\in\mathbb{N}}$ by, for any $j\in\mathbb{N}$:
\begin{align*}
    \mathcal{A}_j\coloneqq \left\{N_j,N_j+1,\ldots,2N_j-1\right\}.
\end{align*}
Note that $(\mathcal{A}_j)_{j\in\mathbb{N}}$ is pairwise disjoint and for any $j\in\mathbb{N}$, $\#\mathcal{A}_j=N_j\geq m_j$.
Define, for any $j\in\mathbb{N}$, the target prefix distinctness profile $r_{j}:\{0,1,\ldots,L_j\}\to\{0,1,\ldots,L_j\}$ by, for any $t\in\{0,1,\ldots,L_j\}$,
\begin{align*}
    r_j(t)\coloneqq \left\lceil \theta t\right\rceil.
\end{align*}
and the associated new-digit times inside the block $I_j$:
\begin{align*}
    I_j\coloneqq\left\{t\in\{1,\ldots,L_j\}:r_j(t)=r_j(t-1)+1\right\}.
\end{align*}
Note that $\# I_j=r_j(L_j)=m_j$.
Define, for any $j\in\mathbb{N}$, the admissible blocks $\mathcal{B}_j\subset {\mathcal{A}_j}^{L_j}$ by requiring that the number of distinct symbols in each prefix is exactly $r_j(t)$:
\begin{align*}
    \mathcal{B}_j
    \coloneqq\bigcap_{t=1}^{L_j}\left\{(d_1,\ldots,d_{L_j})\in{\mathcal{A}_j}^{L_j}:\#\{d_1,\ldots,d_{t}\}=r_j(t)\right\}.
\end{align*}
That is, at times $t\in I_j$, the digit $d_t$ takes value that has not previously appeared in that block; at times $t\in \mathbb{N}\setminus I_j$, the digit $d_t$ repeats one of the already-seen values in that block.
Let $F_\theta$ be the set of all real numbers in the unit interval whose digit sequence is a concatenation of blocks from $(\mathcal{B}_j)_{j\in\mathbb{N}}$; that is,
\begin{align*}
    F_\theta
    \coloneqq
    \bigcap_{j\in\mathbb{N}}\left\{x\in[0,1):(d_{S_{j-1}+1}(x),\ldots,d_{S_j}(x))\in\mathcal{B}_j\right\},
\end{align*}
where $S_0\coloneqq 0$, and for any $J\in\mathbb{N}$, $S_{J}=\sum_{j=1}^J L_j$.

\begin{prop}\label{prop: thm 1 prop 1}
    For any $0<\theta\leq1$, $F_\theta\subset E_\theta$.
\end{prop}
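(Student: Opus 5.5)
The plan is to compute $D_n(x)$ exactly for $x\in F_\theta$, using the block structure, and then show that $D_n(x)/n\to\theta$.

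\textbf{Step 1: the block-level identity.} The alphabets $\mathcal{A}_j$ are pairwise disjoint, and block $j$ uses only symbols from $\mathcal{A}_j$. Hence distinct symbols from different blocks never coincide. For $n=S_{J-1}+t$ with $J\in\mathbb{N}$ and $0\le t\le L_J$, the definition of $\mathcal{B}_j$ (prefix distinctness exactly $r_j$) gives
\begin{align*}
    D_n(x)=\sum_{j=1}^{J-1} r_j(L_j)+r_J(t)=\sum_{j=1}^{J-1}m_j+\lceil\theta t\rceil .
\end{align*}
For this to make sense we need $\mathcal{B}_j\neq\emptyset$. This holds because $\#\mathcal{A}_j=N_j\ge m_j$, so fresh symbols are always available at the times in $I_j$. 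At the times $t\notin I_j$ a repeat is available, since $t\ge 2$ there ($r_j(1)=1=r_j(0)+1$ as $\theta>0$). That $F_\theta$ is nonempty is not needed for the inclusion, but it is immediate.

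\textbf{Step 2: estimate the sum.} Since $\theta L_j\le m_j<\theta L_j+1$ and $\theta t\le\lceil\theta t\rceil<\theta t+1$, we get
\begin{align*}
    \theta n\le D_n(x)\le \theta n+J .
\end{align*}
Here $n\ge S_{J-1}=2^J-2$, so $J=O(\log n)$. Dividing by $n$ gives $\theta\le D_n(x)/n\le\theta+O(\log n/n)\to\theta$. Hence $x\in E_\theta$.

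\textbf{Main difficulty.} There is no real obstacle. The only delicate points are the disjointness of the $\mathcal{A}_j$, which prevents cross-block repeats from being double counted, and the bookkeeping of the ceiling errors. The latter accumulate only logarithmically because the block lengths grow geometrically. The case $\theta=1$ is included: then $r_j(t)=t$ and every block consists of distinct symbols.
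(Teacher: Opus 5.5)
Your proposal is correct and follows essentially the same route as the paper. Both use disjointness of the alphabets $\mathcal{A}_j$ to get the exact identity $D_n(x)=\sum_{j=1}^{J-1}\lceil\theta L_j\rceil+\lceil\theta t\rceil$, and then bound the accumulated ceiling errors by $J=O(\log n)$; your side check that $\mathcal{B}_j\neq\emptyset$ is not needed for the inclusion itself.
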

\begin{proof}
Pick any $x\in F_\theta$. Since the alphabets $(\mathcal{A}_j)_{j\in\mathbb{N}}$ are disjoint, distinct digits contributed by different blocks do not overlap. Thus, for any $n\in\mathbb{N}$, there exists unique $J\in\mathbb{N}$ such that $S_{J-1}<n\leq S_J$ and:
\begin{align*}
    D_n(x)=\sum_{j=1}^{J-1}\#\left\{d_{S_{j-1}+1}(x),\ldots,d_{S_{j}}(x)\right\}+\#\left\{d_{S_{J-1}+1}(x),\ldots,d_{S_{J-1}+t}(x)\right\},
\end{align*}
where $t\coloneqq n-S_{J-1}\in\{1,\ldots,L_J\}$. By the construction of $(\mathcal{B}_j)_{j\in\mathbb{N}}$ and the definition of $(m_j)_{j\in\mathbb{N}}$,
\begin{align*}
    D_n(x)
    =\sum_{j=1}^{J-1}m_j+\left\lceil \theta t\right\rceil
    =\sum_{j=1}^{J-1}\left\lceil \theta L_j\right\rceil+\left\lceil \theta t\right\rceil.
\end{align*}
One obtains:
\begin{align*}
    \theta n=(S_{J-1}+t)\theta\leq D_n(x)< (S_{J-1}+t)\theta +J=\theta n+J.
\end{align*}
Note that $J<1+\log{n}/\log{2}$ and:
\begin{align*}
    0\leq\frac{D_n(x)}{n}-\theta\leq \frac{J}{n}<\frac{1+\log{n}/\log{2}}{n}.
\end{align*}
Thus, $\lim_{n\to+\infty}D_n(x)/n=\theta$.
\end{proof}

\begin{prop}\label{prop: thm 1 prop 2}
    For any $\delta>0$, there exists $j_{\delta}\in\mathbb{N}$ such that for any $j\in\mathbb{N}$, if $j\geq j_\delta$ then $\#\mathcal{B}_j\geq2$ and for any $(d_1,\ldots,d_{L_j})\in\mathcal{B}_j$,
    \begin{align}
    \label{eq:ratio_est}
        \frac{\log\#\mathcal{B}_j}{-\log \operatorname{diam}{C(d_1,\ldots,d_{L_j})}}
        \geq
        \frac{1-\delta}{\rho+\delta},
    \end{align}
    where for any $n\in\mathbb{N}$ and $(d_1,\ldots,d_{n})\in\mathbb{N}^n$,
    \begin{align*}
        C(d_1,\ldots,d_{n})
        =\bigcap_{i=1}^{n}\left\{x\in[0,1):d_i(x)=d_i\right\}.
    \end{align*}
\end{prop}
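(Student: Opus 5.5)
We need a lower bound on $\#\mathcal{B}_j$ and an upper bound on $-\log\operatorname{diam} C(d_1,\ldots,d_{L_j})$, then compare their ratio.

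\textbf{Counting $\mathcal{B}_j$.} A block in $\mathcal{B}_j$ is determined by two kinds of choices.
\begin{itemize}
    \item At each new-digit time $t\in I_j$, we pick a symbol of $\mathcal{A}_j$ not yet used. If $t$ is the $i$-th element of $I_j$, there are $N_j-(i-1)$ options.
    \item At each remaining time $t\notin I_j$, we pick one of the $r_j(t)\geq 1$ symbols already seen. For $t\geq 1$ we have $r_j(t)\geq 1$ since $\theta>0$, and $1\in I_j$.
\end{itemize}
Ignoring the repeat choices gives
\begin{align*}
    \#\mathcal{B}_j\geq \prod_{i=0}^{m_j-1}(N_j-i)=\frac{N_j!}{(N_j-m_j)!}.
\end{align*}
Since $N_j=2^{j-1}\max\{m_1,k_1\}\geq c\,2^j$ for some $c>0$ and $m_j\leq L_j=2^j$, the quantities $N_j$ and $m_j$ are comparable to $2^j$. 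Stirling's formula then gives $\log\#\mathcal{B}_j\geq m_j\log N_j-O(m_j)$. The loss from $N_j-m_j$ possibly being small is only $O(m_j)$, since $\log(N!/(N-m)!)\geq m\log(N-m+1)$ is too crude here. The clean statement is $\log(N!/(N-m)!)\geq m\log N - m$, which follows from $N!/(N-m)!\geq (N/e)^m$ for $m\leq N$; indeed $N!\geq (N/e)^N$ and $(N-m)!\leq (N-m)^{N-m}$. Since $m_j\geq\theta 2^j$, this yields
\begin{align*}
    \log\#\mathcal{B}_j\geq \theta 2^j\bigl(j\log 2-O(1)\bigr).
\end{align*}
In particular $\#\mathcal{B}_j\geq 2$ for large $j$.

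\textbf{Bounding the cylinder diameter.} The maps are affine, so $\operatorname{diam}C(d_1,\ldots,d_{L_j})=\prod_{t=1}^{L_j}p_{d_t}$. Every digit lies in $\mathcal{A}_j=[N_j,2N_j)$ with $N_j\geq k_1$, so Lemma~\ref{lem:potter} applies with a given $\varepsilon$ once $N_j\geq k_\varepsilon$. Combining the ratio bound with $p_{N_j}\geq N_j^{-\rho}L(N_j)/2$, and using the slowly varying bound $L(N)\geq N^{-\varepsilon}$ for large $N$, we get for each $t$
\begin{align*}
    p_{d_t}\geq c_\varepsilon N_j^{-\rho-\varepsilon}.
\end{align*}
Hence
\begin{align*}
    -\log\operatorname{diam}C\leq L_j\bigl((\rho+\varepsilon)\log N_j+O_\varepsilon(1)\bigr)=2^j\bigl((\rho+\varepsilon)j\log 2+O_\varepsilon(1)\bigr).
\end{align*}

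\textbf{The ratio, and the main obstacle.} Dividing the two estimates gives a ratio of roughly $\theta/(\rho+\varepsilon)$. This is \emph{not} $(1-\delta)/(\rho+\delta)$ when $\theta<1$. The problem is that the repeat positions contribute to the diameter but were dropped from the count, so the repeat choices must be counted.

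At a repeat time $t\notin I_j$ there are $r_j(t)=\lceil\theta t\rceil$ options. The time $t$ ranges over most of $[1,2^j]$, and $\lceil\theta t\rceil\geq\theta t$ with $\theta t\geq 2^{j(1-\delta/2)}$ except for the $t<2^{j(1-\delta/2)}/\theta$. That exceptional range is a negligible fraction, $2^{-j\delta/2}/\theta$, of $L_j$. Therefore
\begin{align*}
    \log\#\mathcal{B}_j\geq \sum_{t=1}^{L_j}\log\bigl(\text{choices at }t\bigr)\geq \bigl(L_j-o(L_j)\bigr)\bigl((1-\delta/2)j\log 2+\log\theta\bigr),
\end{align*}
where the new-digit times are handled by the falling-factorial bound above. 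Equivalently, each time $t$ has at least $\min\{\theta t,\,N_j-m_j+1\}$ options, and the subcase $N_j-m_j$ small needs care. To handle it, I would note that the product over new-digit times still contributes about $m_j(j\log2-1)$, so each of the $L_j$ positions contributes at least about $(1-\delta/2)j\log 2-O(1)$ on average.

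Dividing by the diameter bound and choosing $\varepsilon=\delta$ and $j$ large so that the $O(1/j)$ terms are absorbed gives
\begin{align*}
    \frac{\log\#\mathcal{B}_j}{-\log\operatorname{diam}C(d_1,\ldots,d_{L_j})}\geq\frac{1-\delta}{\rho+\delta},
\end{align*}
which is \eqref{eq:ratio_est}. The crux is this per-position counting, in particular verifying that both repeat and new-digit times carry about $j\log 2$ bits each.
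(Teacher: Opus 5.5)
Your argument is correct and is essentially the paper's: the exact count $\#\mathcal{B}_j=\frac{N_j!}{(N_j-m_j)!}\prod_{t\notin I_j}r_j(t-1)$ with the falling-factorial bound $(N_j/e)^{m_j}$, played against the Potter-bound estimate $p_{d_t}\geq c\,N_j^{-\rho}L(N_j)$ for every digit of the block. The only difference is in the repeat product: you discard the $o(L_j)$ early times and credit each remaining one with at least $(1-\delta/2)j\log 2$, whereas the paper uses $\prod_{t\notin I_j}r_j(t-1)\geq\theta^{L_j-1}(L_j-1)!/(m_j-1)!\geq\theta^{L_j-1}m_j^{L_j-m_j}$ to get the cleaner bound $\log\#\mathcal{B}_j\geq L_j\log N_j-c_\theta L_j$; your version loses only a harmless $\delta/2$, and your worry about $N_j-m_j$ being small is moot because the count is an exact product already handled by $(N_j/e)^{m_j}$.
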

\begin{proof}
By Lemma~\ref{lem:potter} and the definition of $(N_j)_{j\in\mathbb{N}}$, there exist a slowly varying function $L:\mathbb{N}\to\mathbb{R}^+$ and $C_1\geq 1$ such that for any $j\in\mathbb{N}$ and $k\in \mathcal{A}_j\subset [N_j,2N_j)$,
\begin{align*}
    p_k\geq \frac{p_{N_j}}{2^{\rho+1}C_1},
\end{align*}
and
\begin{align*}
    p_{N_j}
    \geq \frac{{N_j}^{-\rho}L(N_j)}{2}.
\end{align*}
By combining the two inequalities above, one obtains that for any $j\in\mathbb{N}$ and $k\in \mathcal{A}_j$,
\begin{align*}
    p_k\geq \frac{{N_j}^{-\rho}L(N_j)}{2^{\rho+2}C_1};
\end{align*}
and, for any $(d_1,\ldots,d_{L_j})\in\mathcal{B}_j$ and $t\in\{1,\ldots,L_j\}$, one obtains $d_t\in \mathcal{A}_j$ and 
\begin{align*}
    \operatorname{diam} C(d_1,\ldots,d_{L_j})
    =\prod_{t=1}^{L_j}p_{d_t}
    \geq 
    \left(\frac{N_j^{-\rho}L(N_j)}{2^{\rho+2}C_1}\right)^{L_j}.
\end{align*}
By taking $K\coloneqq(\rho+2)\log2+\log{C_1}$, one obtains:
\begin{align}
\label{eq:-logdiam_upper}
    -\log \operatorname{diam} C(d_1,\ldots,d_{L_j})
    &\leq
    \rho L_j\log N_j - L_j\log L(N_j) + L_j\,K.
\end{align}

Note that, for any $j\in\mathbb{N}$, 
\begin{align}
\label{eq: count of B}
    \#\mathcal{B}_j =
    \frac{N_j!}{(N_j-m_j)!}\prod_{t\in \{1,\ldots,L_j\}\setminus{I_j}}r_j(t-1).
\end{align}
By standard bounds on factorials, one obtains:
\begin{align*}
    \frac{N_j!}{(N_j-m_j)!}
    \geq \frac{(N_j/e)^{N_j}}{(N_j-m_j)^{\,N_j-m_j}}
    = \left(\frac{N_j}{e}\right)^{m_j}\left(\frac{N_j}{N_j-m_j}\right)^{N_j-m_j}
    \geq \left(\frac{N_j}{e}\right)^{m_j}.
\end{align*}
Thus, for any $j\in\mathbb{N}$,
\begin{align}\label{eq:falling_factorial_lb}
    \log{\frac{N_j!}{(N_j-m_j)!}}\geq m_j\log N_j - m_j.
\end{align}
One obtains that for any $j\in\mathbb{N}$, 
\begin{align*}
    \prod_{t\in I_j\cap{\{2,\ldots,L_j\}}} r_j(t-1)=(m_j-1)!,
\end{align*}
and
\begin{align*}
    \prod_{t\in\{2,\ldots,L_j\}}r_j(t-1)\geq \prod_{u=1}^{L_j-1}\theta u=\theta^{L_j-1}(L_j-1)!.
\end{align*}
By combining both together, one obtains that for any $j\in\mathbb{N}$, 
\begin{align*}
    \prod_{t\in\{1,\ldots,L_j\}\setminus{I_j}} r_j(t-1)
    \geq \theta^{L_j-1}\frac{(L_j-1)!}{(m_j-1)!}
    \geq\theta^{L_j-1}{m_j}^{L_j-m_j},
\end{align*}
and
\begin{align}
\label{eq:repeat_product_lb}
    \log{\prod_{t\in\{1,\ldots,L_j\}\setminus{I_j}} r_j(t-1)}
    \geq (L_j-1)\log{\theta}+(L_j-m_j)\log{m_j}.
\end{align}
By combining~\eqref{eq: count of B},~\eqref{eq:falling_factorial_lb}, and~\eqref{eq:repeat_product_lb}, one obtains:
\begin{align*}
    \log{\#\mathcal{B}_j}
    &\geq
    m_j\log N_j - m_j  + (L_j-m_j)\log{m_j}+(L_j-1)\log{\theta} \\
    &= L_j\log{N_j}-(L_j-m_j)\log{\frac{N_j}{m_j}}-m_j+(L_j-1)\log{\theta}
    .
\end{align*}
By the definitions of $(m_j)_{j\in\mathbb{N}}$ and $(N_j)_{j\in\mathbb{N}}$, for any $j\in\mathbb{N}$, one obtains $1\leq{N_j}/{m_j}\leq N_1/\theta$ and:
\begin{align}
\label{eq: log Bjlower}
    \log{\#\mathcal{B}_j}\geq L_j\log{N_j}-c_\theta \,L_j,
\end{align}
where $c_\theta \coloneqq \log{N_1}-2\log{\theta}+1>0$.

Therefore, by combining~\eqref{eq:-logdiam_upper} and~\eqref{eq: log Bjlower}, for any $j\in\mathbb{N}$,
\begin{align*}
    \frac{\log\#\mathcal{B}_j}{-\log \operatorname{diam} C(d_1,\ldots,d_{L_j})}
    \geq
    \frac{\log{N_j} - c_\theta }{\rho\log{N_j} - \log{L(N_j)} +K}.
\end{align*}
Pick any $\delta>0$. Since $L$ is slowly varying and $(N_j)_{j\in\mathbb{N}}$ is unbounded, there exists $j_{\delta}\in\mathbb{N}$ such that for any $j\in\mathbb{N}$, if $j\geq j_{\delta}$ then $\#\mathcal{B}_j\geq2$ and:
\begin{align*}
    \frac{\max{\{2\left|\log L(N_j)\right|,2K,c_\theta\}}}{\log{N_j}}
    \leq\delta,
\end{align*}
in particular,
\begin{align*}
    \frac{\log\#\mathcal{B}_j}{-\log \operatorname{diam} C(d_1,\ldots,d_{L_j})}
    \geq
    \frac{(1-\delta)\log N_j}{(\rho+\delta)\log N_j}
    =
    \frac{1-\delta}{\rho+\delta}.
\end{align*}
\end{proof}

Define a probability measure $\mu$ supported on $F_\theta$ by assigning equal mass to each block cylinder: for $J\in\mathbb{N}$ and $(b_1,\ldots,b_J)\in\prod_{j=1}^J\mathcal{B}_j$, 
\begin{align*}
    \mu{\left(C(b_1,\ldots,b_J)\right)}
    \coloneqq\prod_{j=1}^J \frac{1}{\#\mathcal{B}_j}.
\end{align*}
\begin{prop}\label{prop: thm 1 prop 2.5}
    For any $0<\theta\leq1$ and $\delta>0$, there exist $H_{\theta,\delta}>0$ and $r_{\theta,\delta}>0$ such that for any interval $I\subset[0,1)$, if $\operatorname{diam}{I}<r_{\theta,\delta}$ then:
    \begin{align*}
        \mu{(I)}
        \leq H_{\theta,\delta}\,(\operatorname{diam}{I})^{(1-\delta)/(\rho+\delta)}.
    \end{align*}
\end{prop}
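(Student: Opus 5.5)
The plan is the usual mass distribution argument. First I would control $\mu$ on \emph{all} cylinders meeting the support, including cylinders that stop in the middle of a block. Then I would pass from cylinders to arbitrary intervals by counting how many cylinders such an interval can meet. Write $s\coloneqq(1-\delta)/(\rho+\delta)$. Since $\delta>0$ is arbitrary, I allow myself to replace $\delta$ by a smaller auxiliary $\delta'$ in the intermediate steps, so that harmless losses still land below the exponent $s$.

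\emph{Step 1 (complete block cylinders).} Take $(b_1,\ldots,b_J)\in\prod_{j\le J}\mathcal{B}_j$. By Proposition~\ref{prop: thm 1 prop 2}, each block with $j\ge j_{\delta'}$ satisfies $1/\#\mathcal{B}_j\le(\operatorname{diam}C(b_j))^{s'}$, where $s'$ is the exponent from that proposition with $\delta'$ in place of $\delta$. The finitely many blocks with $j<j_{\delta'}$ only contribute a multiplicative constant. Cylinder diameters are multiplicative, since $\operatorname{diam}C(b_1,\ldots,b_J)=\prod_j\operatorname{diam}C(b_j)$. Hence
\begin{align*}
    \mu(C(b_1,\ldots,b_J))\le H'\,(\operatorname{diam}C(b_1,\ldots,b_J))^{s'}.
\end{align*}

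\emph{Step 2 (partial cylinders).} Let $u$ be a prefix of length $S_J+t$ with $0<t<L_{J+1}$ that extends to a point of $F_\theta$. Its mass is $\mu(C(b_1,\ldots,b_J))\cdot\#\{\text{completions of the partial block}\}/\#\mathcal{B}_{J+1}$. I would bound this ratio above by the reciprocal of the number of admissible length-$t$ prefixes of blocks in $\mathcal{B}_{J+1}$. That number equals $\prod_{t'\le t}c(t')$, where $c(t')=N_{J+1}-r(t'-1)$ at new-digit times and $c(t')=r(t'-1)$ at repeat times.

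I expect this to be the main obstacle. Early in a block, the repeat times offer only about $\theta t'$ choices. Meanwhile each digit shrinks the diameter by roughly $N_{J+1}^{-\rho}$. So for small $t$ the local ratio $\log(\text{choices})/(-\log\text{diam})$ can be far below $s$.

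To handle this I would not demand the exponent $s$ at the level of the partial block alone. The deficit is at most $\sum_{t'\le t}\log(N_{J+1}/c(t'))\le O(t)+t\log(N_{J+1}/(\theta t))$, and this is absorbed by the mass already accumulated in the completed prefix. The relevant comparisons are that $-\log\operatorname{diam}C(b_1,\ldots,b_J)\asymp S_J\log N_J\asymp L_{J+1}\log N_{J+1}$, and that $L_{J+1}=2L_J$. The point to check is that the deficit is $o\bigl(L_{J+1}\log N_{J+1}\bigr)$ uniformly in $t$. If that bound is too crude, I would instead pass to a slightly smaller exponent $s''<s'$ and use the slack $(s'-s'')\cdot S_J\log N_J$ to pay for it. Either way I aim for $\mu(C(u))\le H''(\operatorname{diam}C(u))^{s''}$ for all admissible prefixes $u$, with $s''\ge s$ once $\delta'$ is chosen small.

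\emph{Step 3 (intervals).} The branches $I_k$ need not be ordered monotonically in $k$. So I would not rely on neighbouring cylinders; I would use a packing count instead. For each $n$, let $\delta_n$ be the minimal diameter of support cylinders of length $n$. By Lemma~\ref{lem:potter}, every support cylinder of length $n+1$ has diameter at least $(\text{its parent's diameter})\cdot N^{-\rho}L(N)/(2^{\rho+2}C_1)$, where $N$ is the current $N_{J+1}$; in particular $\delta_{n+1}\ge\delta_n N^{-\rho}L(N)/(2^{\rho+2}C_1)$.

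Given an interval $I$ with $\operatorname{diam}I$ small, choose $n$ with $\delta_{n+1}\le\operatorname{diam}I<\delta_n$. Since support cylinders of length $n$ are disjoint intervals of length at least $\delta_n>\operatorname{diam}I$, the interval $I$ meets at most two of them, say $C_1,C_2$. Inside each $C_i$, $I$ meets at most $\operatorname{diam}I/\delta'_{i}+2$ children, where $\delta'_i$ is the minimal child diameter, and each child carries mass at most $\mu(C_i)/c$.

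I would then bound $\mu(I\cap C_i)\le\min\bigl\{\mu(C_i),\,(\operatorname{diam}I/\delta'_i+2)\,\mu(C_i)/c\bigr\}$ and interpolate with $\min\{a,b\}\le a^{1-s}b^{s}$. Combined with the cylinder estimates from Step 2, this yields $\mu(I)\le H_{\theta,\delta}(\operatorname{diam}I)^{s}$. The threshold $r_{\theta,\delta}$ is chosen so that $n$ is large enough for all the asymptotic estimates above to hold.
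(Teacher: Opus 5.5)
Your plan is correct, and it takes a different route from the paper's. The paper proves the Frostman-type bound only for complete block cylinders $C(b_1,\ldots,b_J)$, via Proposition~\ref{prop: thm 1 prop 2}. It then passes to intervals by a stopping time over block cylinders: each maximal block cylinder inside $I$ lies in one of the two boundary cylinders $P(J-1,a(I,u))$, and the halving $\mu(P(J+1,\cdot))\le\frac12\mu(P(J,\cdot))$ sums the resulting series.

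That route is shorter, but consecutive block scales differ by a factor of roughly $N_J^{-\rho L_J}$. The paper's inequality $\sum_{J\ge j_\delta}\sum_u\mu(P(J-1,a(I,u)))\le 2\sum_u\mu(P(J_u,a(I,u)))$ follows from the geometric decay only for the terms with $J-1\ge J_u$. For $J\le J_u$, a boundary parent $P(J-1,a(I,u))$ has diameter larger than $\operatorname{diam}I$ and may contain a huge number of level-$J$ block cylinders lying in $I$. Bounding their total mass by $\mu(P(J-1,a(I,u)))$ does not give $(\operatorname{diam}I)^{(1-\delta)/(\rho+\delta)}$. Your Step~2 (mid-block prefixes) together with the packing count in Step~3 is exactly what controls intervals at these intermediate scales. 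So your route buys a complete argument at the cost of digit-level bookkeeping.

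When you write it out, make two points explicit.

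\emph{Step~2.} The check you flag does hold. Since $N_{J+1}/L_{J+1}=\max\{m_1,k_1\}/2$ is constant, $\max_{t\le L_{J+1}}t\log(N_{J+1}/(\theta t))=O_\theta(L_{J+1})$. Absorbing this deficit, however, requires slack of order $L_{J+1}\log N_{J+1}$ in the completed prefix. So prove Step~1 with an exponent strictly larger than the one targeted in Step~2: a bound with the same exponent and a fixed constant has no room for the deficit.

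\emph{Step~3.} When $\operatorname{diam}I<\delta_i'$, the interval meets at most two children of $C_i$, and you only know $\operatorname{diam}I\ge\delta_{n+1}$, which is a \emph{global} minimum. You must compare the child mass, at most $H''(\operatorname{diam}C_i\cdot\max_{d\in\mathcal{A}_{J+1}}p_d)^{s''}$, with $\delta_{n+1}^{s}$. For this you need that all support cylinders of length $n$ have diameters within a factor $e^{O(n)}$ of one another. This holds because $\max_{d\in\mathcal{A}_j}p_d/\min_{d\in\mathcal{A}_j}p_d$ is bounded uniformly in $j$ by the Potter bounds. The $e^{O(n)}$ loss is absorbed because $-\log\delta_n$ is at least a constant times $n\log n$ and $s''>s$ strictly.
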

\begin{proof}
Pick any $\delta>0$. One obtains from~\eqref{eq:ratio_est} that for any $J\in\mathbb{N}$ and $(d_1,\ldots,d_{S_J})\in\prod_{j=1}^J\mathcal{B}_j$, 
\begin{align}
\label{eq: 2}
    \mu{\left(C{(d_1,\ldots,d_{S_J}})\right)}
    \leq A_{\theta,\delta}\left(\operatorname{diam}{C{(d_1,\ldots,d_{S_J}})}\right)^{(1-\delta)/(\rho+\delta)},
\end{align}
where $A_{\theta,\delta}\coloneqq\prod_{j=1}^{j_\delta-1}\sup_{b_j\in\mathcal{B}_j}(\operatorname{diam}{C{(d_1,\ldots,d_{S_j}})})^{-(1-\delta)/(\rho+\delta)}/\#\mathcal{B}_j<+\infty$.

Pick any interval $I\subset[0,1)$. Define the stopping time $\tau_I:I\to\mathbb{N}$ by, for any $x\in I$,
\begin{align*}
    \tau_I(x)
    \coloneqq
    \min{\left\{J\in\mathbb{N}:C{(d_1(x),\ldots,d_{S_J}(x))}\subset I\right\}}.
\end{align*}
Define a sequence of covers of pairwise disjoint maximal block cylinders $(\mathcal{C}_J(I))_{J\in\mathbb{N}}$ by, for any $J\in\mathbb{N}$,
\begin{align*}
    \mathcal{C}_J(I)\coloneqq\{P(J,x):x\in I\cap F_{\theta}\text{ and }\tau_I(x)=J\},
\end{align*}
where for any $x\in F_{\theta}$ and $J\in\mathbb{N}$,
\begin{align*}
    P(J,x)\coloneqq C{\left(d_1(x),\ldots,d_{S_J}(x)\right)}
\end{align*}
Thus, $\bigcup_{J\in\mathbb{N}}\mathcal{C}_J(I)$ is a cover of $I\cap F_{\theta}$. By the disjointness, one obtains:
\begin{align}
\label{eq: 3}
    \mu{(I)}=\mu{(I\cap F_\theta)}=\sum_{J\in\mathbb{N}}\sum_{C\in\mathcal{C}_J(I)}\mu(C).
\end{align}
Let $a(I,1)$ and $a(I,2)$ be the endpoints of $I$. By the maximality, for any $J\in\mathbb{N}$ and $C\in\mathcal{C}_J(I)$, $C$ is contained in one of the two boundary cylinders $P(J-1,a(I,1))$ or $P(J-1,a(I,2))$, and:
\begin{align}
\label{eq: 4}
    \sum_{C\in\mathcal{C}_J(I)}\mu(C)\leq \sum_{u\in\{1,2\}}\mu{\left(P(J-1,a(I,u))\right)}.
\end{align}
Let $r_{\theta,\delta}$ be a lower bound of the diameters of block cylinders in depth $j_\delta$ given by:
\begin{align*}
    r_{\theta,\delta}
    \coloneqq
    \prod_{j=1}^{j_\delta}\left(\min_{k\in\mathcal{A}_j}p_k\right)^{L_j}>0.
\end{align*}

Pick any interval $I\subset[0,1)$. Suppose $\operatorname{diam}{I}<r_{\theta,\delta}$. Then for any $x\in I\cap F_{\theta}$, one obtains $\tau_I(x)\geq j_\delta$. Define, for any $u\in\{1,2\}$, 
\begin{align*}
    J_u\coloneqq\min{\{J\in\mathbb{N}\cap[j_\delta,+\infty):\operatorname{diam}{P(J,a(I,u))}\leq \operatorname{diam}{I}\}}.
\end{align*}
By Proposition~\ref{prop: thm 1 prop 2}, for any $J\in\mathbb{N}$, if $J\geq j_\delta$ then $\#\mathcal{B}_{J+1}\geq2$ and for any $u\in\{1,2\}$,
\begin{align*}
    \mu{\left(P(J+1,a(I,u))\right)}
    \leq\frac{1}{\#\mathcal{B}_{J+1}}{\mu{\left(P(J,a(I,u))\right)}}
    \leq\frac{1}{2}\mu{\left(P(J,a(I,u))\right)}
    .
\end{align*}
Thus, one obtains that for any $u\in\{1,2\}$,
\begin{align}
\label{eq: 13}
    \sum_{J=J_u}^{+\infty}\mu{(P(J,a(I,u)))}
    \leq \mu{(P{(J_u,a({I,u})})}\sum_{m=0}^{+\infty}2^{-m}
    =2\mu{(P{(J_u,a({I,u})}))}.
\end{align}
By combining~\eqref{eq: 3}, ~\eqref{eq: 4} and~\eqref{eq: 13}, one obtains:
\begin{align*}
    \mu{(I)}
    \leq\sum_{J=j_\delta}^{+\infty}\sum_{u\in\{1,2\}}\mu{(P{(J-1,a({I,u})}))}
    \leq2\sum_{u\in\{1,2\}}\mu{(P{(J_u,a({I,u})}))}.
\end{align*}
By~\eqref{eq: 2} and the definitions of $J_1$ and $J_2$, for any $u\in\{1,2\}$,
\begin{align*}
    \mu{\left(P{(J_u,a({I,u})})\right)}
    \leq A_{\theta,\delta}\,(\operatorname{diam}{P{(J_u,a({I,u})})})^{(1-\delta)/(\rho+\delta)}
    \leq A_{\theta,\delta}\,(\operatorname{diam}{I})^{(1-\delta)/(\rho+\delta)}.
\end{align*}
Therefore, 
\begin{align*}
    \mu{(I)}
    \leq 4A_{\theta,\delta}\,(\operatorname{diam}{I})^{(1-\delta)/(\rho+\delta)}.
\end{align*}
\end{proof}

\begin{prop}\label{prop: thm 1 prop 3}
    For any $0<\theta\leq1$, $\dim{F_\theta}\geq 1/\rho$.
\end{prop}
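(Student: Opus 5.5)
The plan is to apply the mass distribution principle to the measure $\mu$ built above, using the Hölder-type estimate of Proposition~\ref{prop: thm 1 prop 2.5}, and then let $\delta\to0$. The first step is to check that $\mu$ is a genuine Borel probability measure supported on $F_\theta$. The block cylinders $C(b_1,\ldots,b_J)$ with $(b_1,\ldots,b_J)\in\prod_{j=1}^J\mathcal{B}_j$ form a nested family. At level $J$ they are pairwise disjoint intervals, and each splits into exactly $\#\mathcal{B}_{J+1}$ children. The prescribed masses are therefore consistent: the mass of a parent is the sum of the masses of its children. By Kolmogorov consistency (equivalently, Carathéodory extension on the generated algebra), $\mu$ extends to a Borel probability measure on $[0,1)$ carried by
\begin{align*}
\bigcap_{J\in\mathbb{N}}\ \bigcup_{(b_1,\ldots,b_J)\in\prod_{j=1}^J\mathcal{B}_j} C(b_1,\ldots,b_J).
\end{align*}
Since each alphabet $\mathcal{A}_j$ is finite, every such union is a finite union of intervals. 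Up to the countably many cylinder endpoints, which carry no $\mu$-mass because cylinder diameters shrink to zero, this intersection is exactly $F_\theta$. Hence $\mu(F_\theta)=1$.

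The second step is to fix $\delta>0$ and set $s_\delta\coloneqq(1-\delta)/(\rho+\delta)$. Let $\{U_i\}$ be any countable cover of $F_\theta$ with $\sup_i\operatorname{diam}U_i<r_{\theta,\delta}$. Each $U_i$ is a subset of $[0,1)$, so it lies in an interval $I_i\subset[0,1)$ with $\operatorname{diam}I_i=\operatorname{diam}U_i$, namely its convex hull. Proposition~\ref{prop: thm 1 prop 2.5} then gives
\begin{align*}
1=\mu(F_\theta)\leq\sum_i\mu(I_i)\leq H_{\theta,\delta}\sum_i(\operatorname{diam}U_i)^{s_\delta}.
\end{align*}
Thus $\mathcal{H}^{s_\delta}(F_\theta)\geq 1/H_{\theta,\delta}>0$, which is the mass distribution principle. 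It follows that $\dim F_\theta\geq s_\delta$. Letting $\delta\downarrow0$ gives $s_\delta\to1/\rho$, so $\dim F_\theta\geq1/\rho$.

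The only point needing care is the first step: the support of $\mu$ must actually be $F_\theta$, and no mass may escape to boundary points. This holds because there are finitely many admissible blocks at each level and cylinder diameters tend to zero, so each point of the limit set lies in a unique nested chain of admissible cylinders. All the analytic content is already contained in Proposition~\ref{prop: thm 1 prop 2.5}. Combined with Proposition~\ref{prop: thm 1 prop 1}, this also yields $\dim E_\theta\geq1/\rho$.
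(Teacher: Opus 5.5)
Your proposal is correct and follows the paper's proof: apply the mass distribution principle to $\mu$ using the H\"older bound of Proposition~\ref{prop: thm 1 prop 2.5}, obtain $\dim F_\theta\geq(1-\delta)/(\rho+\delta)$, and let $\delta\to0$. Your check that $\mu$ is a Borel probability measure with $\mu(F_\theta)=1$ is something the paper takes for granted; the one imprecision is that single points carry no mass because the block-cylinder masses $\prod_{j\leq J}(\#\mathcal{B}_j)^{-1}$ tend to $0$ (since $\#\mathcal{B}_j\geq2$ eventually), not merely because the cylinder diameters shrink.
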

\begin{proof}
By applying Proposition~\ref{prop: thm 1 prop 2.5} and \cite[Mass distribution principle~4.2]{Falconer2003FractalGeometry}, one obtains that for any $\delta>0$,
\begin{align*}
    \dim{F_\theta}\geq\frac{1-\delta}{\rho+\delta};
\end{align*}
hence the required lower bound follows.
\end{proof}

\subsection{Hausdorff--Cantelli and tilting}

It remains to prove the upper bound $\dim E_{\geq \theta,+}\leq 1/\rho$. For $\rho=1$, the desired bound is immediate as $E_{\geq \theta,+}\subset[0,1)$. Thus, one may assume $\rho>1$ and apply a Hausdorff--Cantelli argument combined with tilting.

The following lemma is a standard consequence in combinatorics.
\begin{lemma}\label{lem:distinctforceslarge}
Let $n\in\mathbb{N}$ and $(x_1,\ldots,x_n)\in\mathbb{N}^n$. Suppose $\#\{x_1,\ldots,x_n\}\geq m$. Then,
\begin{align*}
    \#\left\{1\leq i\leq n: x_i\geq \left\lceil\frac{m}{2}\right\rceil \right\}
    \geq \left\lceil\frac{m}{2}\right\rceil.
\end{align*}
\end{lemma}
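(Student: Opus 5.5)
The plan is a pigeonhole argument on the set of distinct values. Let $V\coloneqq\{x_1,\ldots,x_n\}\subset\mathbb{N}$ be this set, so $\#V\geq m$. Put $k\coloneqq\lceil m/2\rceil$.

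\emph{Step 1: many distinct values are large.} The values strictly below $k$ lie in $\{1,\ldots,k-1\}$, which has $k-1$ elements. Hence at most $k-1$ elements of $V$ are smaller than $k$, and
\begin{align*}
    \#\{v\in V: v\geq k\}\geq m-(k-1)=m-\left\lceil\frac{m}{2}\right\rceil+1.
\end{align*}
Since $\lceil m/2\rceil\leq (m+1)/2$, the right-hand side is at least $(m+1)/2\geq\lceil m/2\rceil$ when $m$ is odd. When $m$ is even it equals $m/2+1\geq m/2$. In both cases at least $\lceil m/2\rceil$ distinct values in $V$ are $\geq k$.

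\emph{Step 2: pass from values to indices.} Each $v\in V$ with $v\geq k$ is attained at some index $i_v$ with $x_{i_v}=v$. Distinct values give distinct indices, so the map $v\mapsto i_v$ is injective into $\{1\leq i\leq n:x_i\geq k\}$. This gives the claimed lower bound on the number of such indices.

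Edge cases need no separate argument. If $m\leq 0$, the bound is trivial because the right-hand side is nonpositive. If $m>n$, the hypothesis cannot hold. The only point needing care is the parity bookkeeping with the ceiling in Step 1, and the estimate $m-k+1\geq k$, valid since $2k\leq m+1$, handles both parities at once. In fact the argument yields the slightly stronger bound $m-\lceil m/2\rceil+1$.
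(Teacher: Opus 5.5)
Your proof is correct and is essentially the paper's argument: the paper also uses $\#(S\cap\{1,\ldots,r-1\})\leq r-1$ to get at least $m-(r-1)=\lfloor m/2\rfloor+1\geq\lceil m/2\rceil$ distinct values $\geq r$, leaving your Step 2 passage from values to indices implicit in the inequality $\#\{i: x_i\geq r\}\geq\#(S\cap\{r,r+1,\ldots\})$. Your explicit injection and edge-case remarks are harmless additions rather than a different route.
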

\begin{proof}
Let $r\coloneqq \lceil m/2\rceil$ and $S\coloneqq\{x_1,\ldots,x_n\}$. Since
\begin{align*}
    \#\left(S\cap\{1,2,\ldots,r-1\}\right)\leq r-1,
\end{align*}
one obtains:
\begin{align*}
    \#\left\{1\leq i\leq n: x_i\geq r \right\}
    \geq\#\left(S\cap\{r,r+1,\ldots\}\right)
    \geq\#S-(r-1)
    \geq m-(r-1)
    =\left\lfloor\frac{m}{2}\right\rfloor+1.
\end{align*}
\end{proof}

The following lemma is a standard consequences regarding tails. Proofs may also be found in references on regular variation.
\begin{lemma}\label{lem:tailpk}
Let $(p_k)_{k\in\mathbb{N}}$ be a probability sequence and $\rho>1$.
Suppose $(p_k)_{k\in\mathbb{N}}$ is regularly varying with tail index $\rho$.
Then there exists a slowly varying function $L:\mathbb{N}\to\mathbb{R}^+$ such that for any $s\in(1/\rho,1)$, there exists $C_s>0$ and $M_{s}\in\mathbb{N}$ such that for any $M\in\mathbb{N}$, if $M\geq M_s$ then:
\begin{align*}
    \sum_{k=M}^{+\infty} {p_k}^{s} \leq C_s M^{1-\rho s}L(M)^{s}.
\end{align*}
\end{lemma}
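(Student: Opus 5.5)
The plan is to reduce the statement to a Karamata-type estimate for the regularly varying sequence $({p_k}^s)_{k\in\mathbb{N}}$. Let $L$ be the slowly varying function from~\eqref{eq:rv}. For $k$ large, $p_k\leq 2k^{-\rho}L(k)$, so ${p_k}^s\leq 2^s k^{-\rho s}L(k)^s$. The function $L^s$ is again slowly varying, and the exponent satisfies $\rho s>1$ because $s>1/\rho$. Hence the tail is dominated by $2^s\sum_{k\geq M}k^{-\rho s}L(k)^s$, and it suffices to show that this sum is $O(M^{1-\rho s}L(M)^s)$.

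To bound the sum, split it into dyadic blocks $[2^iM,2^{i+1}M)$ for $i\geq0$, in the same spirit as Lemma~\ref{lem:potter}. Fix $\eta\coloneqq(\rho s-1)/2>0$. By the Potter bound \cite[Theorem 1.5.6]{BinghamGoldieTeugels1987}, applied to $L^s$, there exist $D>0$ and $M_0\in\mathbb{N}$ such that $L(k)^s\leq D(k/M)^{\eta}L(M)^s$ whenever $k\geq M\geq M_0$. Therefore, for $M\geq M_0$,
\begin{align*}
    \sum_{k=M}^{+\infty}k^{-\rho s}L(k)^s
    \leq D\,L(M)^s M^{-\eta}\sum_{k=M}^{+\infty}k^{-\rho s+\eta}.
\end{align*}
Since $\rho s-\eta=1+\eta>1$, comparison with the integral gives
\begin{align*}
    \sum_{k=M}^{+\infty}k^{-\rho s+\eta}\leq \left(1+\frac{1}{\eta}\right)M^{1-\rho s+\eta}.
\end{align*}
Combining the two displays yields the claim with $C_s\coloneqq 2^sD(1+1/\eta)$ and $M_s\coloneqq\max\{M_0,k_0\}$, where $k_0$ is chosen so that $p_k\leq 2k^{-\rho}L(k)$ for all $k\geq k_0$.

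The only delicate point is the uniformity of the Potter bound over all $k\geq M$, not merely over $k<2M$. The cited theorem supplies this: for any $\eta>0$ there is $X$ such that $L(y)/L(x)\leq A\max\{(y/x)^{\eta},(y/x)^{-\eta}\}$ for all $x,y\geq X$. If one prefers to work with sequences rather than functions, one can first extend $L$ to a measurable slowly varying function on $[1,+\infty)$ by making it piecewise constant. Alternatively, one can cite Karamata's theorem directly, which gives $\sum_{k\geq M}k^{-\rho s}L(k)^s\sim M^{1-\rho s}L(M)^s/(\rho s-1)$.

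With either route, everything else is routine.
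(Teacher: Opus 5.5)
Your proof is correct and is essentially the paper's argument: the same bound $p_k\le 2k^{-\rho}L(k)$, a Potter bound with the same exponent (your $\eta=(\rho s-1)/2$ is the paper's $\varepsilon s$, so your tail exponent $1+\eta$ is its $\beta$), and an integral comparison for $\sum_{k\ge M}k^{-(1+\eta)}$. The dyadic splitting you announce is never used, since the Potter bound uniform over all $k\ge M$ does all the work, so that sentence can be dropped.
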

\begin{proof}
    Pick any $s\in(1/\rho,1)$. By regular variation~\eqref{eq:rv}, there exists a slowly varying function $L:\mathbb{N}\to\mathbb{R}^+$ and $k_0\in\mathbb{N}$ such that for any $k\in\mathbb{N}$, if $k\geq k_0$ then $p_k\leq 2k^{-\rho}L(k)$. By taking $\varepsilon\coloneqq(\rho-1/s)/2>0$ and the Potter bound for slowly varying function \cite[Theorem 1.5.6]{BinghamGoldieTeugels1987}, there exist $M_0\in\mathbb{N}$ and $D_\varepsilon>0$ such that for any $k,M\in\mathbb{N}$, if $M_0\leq M\leq k$ then:
    \begin{align*}
        \frac{L(k)}{L(M)}\leq D_{\varepsilon}\left(\frac{k}{M}\right)^{\varepsilon}.
    \end{align*}
    Define $\beta\coloneqq(\rho-\varepsilon)s>1$. For any $M\in\mathbb{N}$, if $M\geq\max{\left\{k_0,M_0,2\right\}}$ then:
    \begin{align*}
        \sum_{k=M}^{+\infty}{p_k}^s
        \leq 2^{s}{D_\varepsilon}^s M^{-\varepsilon s} L(M)^s\sum_{k=M}^{+\infty}{k}^{-(\rho-\varepsilon)s}
        \leq\frac{2^{s+1}{D_\varepsilon}^s}{\beta-1}M^{1-\rho s} L(M)^s.
    \end{align*}
\end{proof}

\begin{prop}\label{prop: thm 1 prop 4}
    For any $0<\theta\leq 1$, $\dim{E_{\geq\theta,+}}\leq 1/\rho$.
\end{prop}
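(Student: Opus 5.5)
Assume $\rho>1$ (the case $\rho=1$ was already disposed of) and fix $s\in(1/\rho,1)$. It suffices to show $\mathcal{H}^s(E_{\geq\theta,+})=0$; letting $s\downarrow1/\rho$ then gives the claim. The argument is a Hausdorff--Cantelli covering by cylinders of level $n$, with the $s$-cost of cylinders controlled by the tail sums of Lemma~\ref{lem:tailpk}. This is the ``tilting'': one sums $\prod p_{d_i}^s$ instead of $\prod p_{d_i}$.

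First I reduce to a combinatorial event. Fix $0<\theta'<\theta$. If $x\in E_{\geq\theta,+}$, then $D_n(x)\geq\theta' n$ for infinitely many $n$. Hence
\[E_{\geq\theta,+}\subset\bigcap_{N}\bigcup_{n\geq N}G_n,\qquad G_n\coloneqq\{x: D_n(x)\geq \theta' n\}.\]
For $x\in G_n$, Lemma~\ref{lem:distinctforceslarge} with $m=\lceil\theta' n\rceil$ shows that at least $r_n\coloneqq\lceil m/2\rceil\geq\theta' n/2$ indices $i\leq n$ satisfy $d_i(x)\geq r_n$. So $G_n$ is covered by the cylinders $C(d_1,\ldots,d_n)$ for which there is a set $S\subset\{1,\ldots,n\}$ with $\#S=r_n$ and $d_i\geq r_n$ for all $i\in S$. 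Since $\operatorname{diam}C(d_1,\ldots,d_n)=\prod_{i}p_{d_i}$, the $s$-dimensional cost is bounded by
\[\sum_{C\subset G_n}(\operatorname{diam}C)^s\leq\binom{n}{r_n}\Big(\sum_{k\geq r_n}p_k^s\Big)^{r_n}\Big(\sum_{k\in\mathbb N}p_k^s\Big)^{n-r_n}.\]
Each summand here is finite because $s>1/\rho$. Write $Z_s\coloneqq\sum_k p_k^s<\infty$, which follows from Lemma~\ref{lem:tailpk}.

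Next I estimate each factor. The binomial coefficient is at most $2^n$, and the last factor is at most $\max\{1,Z_s\}^n$; both are only exponential in $n$. By Lemma~\ref{lem:tailpk}, for large $n$,
\[\sum_{k\geq r_n}p_k^s\leq C_s r_n^{1-\rho s}L(r_n)^s.\]
By the Potter bound, $L(r_n)^s\leq r_n^{\eta}$ for any fixed $\eta>0$ and large $n$. Choose $\eta<(\rho s-1)/2$. Then the middle factor is at most $(C_s r_n^{-(\rho s-1)/2})^{r_n}$. Because $r_n\geq\theta' n/2$ is linear in $n$, this factor is $\exp(-c\,n\log n)$ for some $c>0$. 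It therefore beats the exponential factors $(2\max\{1,Z_s\})^n$, and the whole cost is at most $e^{-c' n\log n}$, which is summable in $n$.

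To conclude, I bound the mesh. Every cylinder used at level $n$ contains at least $r_n$ digits $\geq r_n$. Its diameter is therefore at most $(\sup_{k\geq r_n}p_k)^{r_n}$, which tends to $0$. Consequently, for each $\epsilon>0$ the covers $\bigcup_{n\geq N}\{C\subset G_n\}$ become $\epsilon$-covers for large $N$. Their total $s$-cost is $\sum_{n\geq N}e^{-c'n\log n}\to0$, so $\mathcal{H}^s(E_{\geq\theta,+})=0$.

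The main obstacle is making sure the super-exponential gain $r_n^{-(\rho s-1)r_n}$ from the tail genuinely dominates. This requires $r_n$ to be linear in $n$, which is exactly where the positive linear rate $\theta$ enters, and it requires keeping the slowly varying factor $L(r_n)^s$ from eroding the power saving. I would handle the latter via Potter's bound with an exponent smaller than half of $\rho s-1$.
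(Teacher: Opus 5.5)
Your proposal is correct and follows essentially the same route as the paper: a Hausdorff--Cantelli cover by level-$n$ cylinders, Lemma~\ref{lem:distinctforceslarge} to force $r_n\asymp n$ digits of size at least $r_n$, and Lemma~\ref{lem:tailpk} with Potter's bound to obtain an $e^{-cn\log n}$ saving that beats the exponential factor $Z_s^n$. The differences are cosmetic. Your explicit union bound $\binom{n}{r_n}\bigl(\sum_{k\geq r_n}p_k^s\bigr)^{r_n}Z_s^{\,n-r_n}$ is the same estimate the paper writes as a binomial tail under the tilted law $q_k=p_k^s/Z_s$. You also work with a general $\theta'<\theta$ and the mesh bound $(\sup_{k\geq r_n}p_k)^{r_n}$, where the paper uses $\theta/2$ and $p_*^{\,n}$.
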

\begin{proof}
Define, for any $n\in\mathbb{N}$,
\begin{align*}
    E_{\geq\theta/2,n} \coloneqq \left\{x\in[0,1): \frac{D_n(x)}{n}\geq\frac{\theta}{2}\right\}.
\end{align*}
Notice that
\begin{align}
    \label{eq: prop 3 subset}
    E_{\geq\theta,+}\subset\limsup_{n\to+\infty}E_{\geq\theta/2,n}
\end{align}
Define $(\mathcal{W}_n(\theta))_{n\in\mathbb{N}}$ by, for any $n\in\mathbb{N}$,
\begin{align*}
    \mathcal{W}_n(\theta)
    \coloneqq\left\{\omega=(\omega_1,\ldots,\omega_n)\in\mathbb{N}^n:\frac{\#\{\omega_1,\ldots,\omega_n\}}{n}\geq\frac{\theta}{2}\right\}.
\end{align*}
Note that for any $n\in\mathbb{N}$,
\begin{align*}
    E_{\geq \theta/2,n}
    =\bigcup_{\omega\in\mathcal{W}_n(\theta)} C(\omega),
\end{align*}
and for any $N\in\mathbb{N}$,
\begin{align*}
    \limsup_{n\to\infty}E_{\geq \theta/2,n}
    \subset
    \mathcal{C}_N\coloneqq\bigcup_{n=N}^\infty \bigcup_{\omega\in\mathcal{W}_n(\theta)} C(\omega).
\end{align*}
Let $p_*\coloneqq\sup_{k\in\mathbb{N}}p_k\in(0,1)$. For any $n,N\in\mathbb{N}$ and $w\in\mathcal{W}_n(\theta)$, if $n\geq{N}$ then $\operatorname{diam}C(\omega)\leq {p_*}^n\leq {p_*}^N$ and $\mathcal{C}_N$ is a ${p_*}^N$-cover of $\limsup_{n\to\infty}E_{\geq \theta/2,n}$.
Thus, for any $s\in(1/\rho,1)$,
\begin{align*}
    \mathcal{H}^s_{{p_*}^N}{\left(\limsup_{n\to\infty}E_{\geq \theta/2,n}\right)}
    \leq
    \sum_{n=N}^\infty\sum_{\omega\in\mathcal W_n(\theta)}(\operatorname{diam}{C(\omega)})^s
    =
    \sum_{n=N}^\infty S_n(s,\theta),
\end{align*}
where for any $n\in\mathbb{N}$, $S_n(s,\theta)$ is the level-$n$ cylinder sum: 
\begin{align*}
    S_n(s,\theta)
    \coloneqq \sum_{\omega\in\mathcal{W}_n(\theta)} (\operatorname{diam}{C(\omega)})^s
    =\sum_{(\omega_1,\ldots,\omega_n)\in\mathcal{W}_n(\theta)}
    \prod_{i=1}^n {p_{\omega_i}}^{s}.
\end{align*}
By the set inclusion~\eqref{eq: prop 3 subset}, it remains to prove that $\sum_{n\in\mathbb{N}} S_n(s,\theta)<+\infty$, as:
\begin{align}
    \label{eq:HsEn}
    \mathcal{H}^s\left(\limsup_{n\to+\infty}E_{\geq\theta/2,n}\right)
    \leq \lim_{N\to+\infty}\sum_{n=N}^{\infty}S_n(s,\theta).
\end{align}

Define $Z_s\coloneqq \sum_{k\in\mathbb{N}} {p_k}^{s}<+\infty$. Let $(X_i)_{i\in\mathbb{{N}}}$ be a sequence independent and identically distributed random variables with marginal probabilities: for any $k\in\mathbb{N}$, 
\begin{align*}
    \mathbb{P}(X_1=k)=q_k\coloneqq \frac{{p_k}^{s}}{Z_s}.
\end{align*}
For any $n\in\mathbb{N}$ and $\omega\coloneqq(\omega_1,\ldots,\omega_n)\in\mathcal{W}_n(\theta)$, one obtains $\mathbb P((X_1,\ldots,X_n)=\omega)=\prod_{i=1}^n q_{\omega_i}$ and
\begin{align}
\label{eq:tilt}
    S_n(s,\theta)
    ={Z_s}^n\sum_{\omega\in\mathcal{W}_n(\theta)}\prod_{i=1}^nq_{\omega_i}
    ={Z_s}^n\,\mathbb{P}\left(\frac{\#\{X_1,\ldots,X_n\}}{n}\geq \frac{\theta}{2}\right).
\end{align}

Pick any $n\in\mathbb{N}$. Suppose $n\geq4M_s/\theta$, as given in Lemma~\ref{lem:tailpk}. By applying Lemma~\ref{lem:distinctforceslarge} with $m=m_n\coloneqq\lceil \theta n/2\rceil$ and $r_n\coloneqq \lceil m_n/2\rceil = \lceil \theta n/4\rceil$, one obtains the event inclusion:
\begin{align*}
    \left\{\frac{\#\{X_1,\ldots,X_n\}}{n}\geq \frac{\theta}{2}\right\}
    \subset \left\{\#\left\{1\leq i\leq n: X_i\geq r_n\right\}\geq r_n\right\},
\end{align*}
The right-hand side is a binomial event $\mathrm{Binomial}(n,q_{\geq r_n})$, where $q_{\geq {r_n}}\coloneqq\sum_{k={r_n}}^{+\infty}q_k$. By a standard upper bound of the binomial coefficients and Lemma~\ref{lem:tailpk}, one obtains:
\begin{align*}
    \mathbb{P}\left(\frac{\#\{X_1,\ldots,X_n\}}{n}\geq \frac{\theta}{2} \right)
    &\leq\left(\frac{enq_{\geq r_n}}{r_n}\right)^{r_n} \\
    &\leq\left(en\frac{C_s}{Z_s}{r_n}^{-\rho s}{L(r_n)}^s\right)^{r_n} \\
    &\leq\left(A_{\theta,s}n^{1-\rho s}{L(r_n)}^s\right)^{r_n}
    ,
\end{align*}
where $A_{\theta,s}\coloneqq (\theta/4)^{-\rho s}C_se/Z_s>0$. Since $L$ is slow varying, for $\varepsilon\coloneqq (\rho-1/s)/2>0$, there exists $B_s>0$ such that for any $k\in\mathbb{N}$,
\begin{align*}
    L(k)\leq B_s\,k^\varepsilon.
\end{align*}
Since $\theta n/4\leq r_n\leq n$, one obtains:
\begin{align*}
    \mathbb{P}\left(\frac{\#\{X_1,\ldots,X_n\}}{n}\geq \frac{\theta}{2} \right)
    &\leq \left(A_{\theta,s}{B_s}^sn^{-(\rho s-1)/2}\right)^{r_n} \\
    &\leq\exp{\left(-\frac{(\rho s-1)\theta}{8}n\log{n}+n\log{A_{\theta,s,1}}\right)}
    ,
\end{align*}
where $A_{\theta,s,1}\coloneqq \max\{1,A_{\theta,s}{B_s}^s\}$. Suppose:
\begin{align*}
    n\geq\exp{\frac{\log{Z_s}+\log{A_{\theta,s,1}}}{(\rho s-1)\theta/16}}.
\end{align*}
By~\eqref{eq:tilt},
\begin{align*}
    S_n(s,\theta)
    &\leq \exp{\left(n\log{Z_s}-\frac{(\rho s-1)\theta}{8}n\log{n}+n\log{A_{\theta,s,1}}\right)} \\
    &\leq \exp{\left(-\frac{(\rho s-1)\theta}{16}n\log{n}\right)},
\end{align*}
one obtains $\sum_{n\in\mathbb{N}} S_n(s,\theta)<+\infty$.

By~\eqref{eq:HsEn} and Hausdorff--Cantelli, 
\begin{align*}
    \mathcal{H}^s\left(\limsup_{n\to+\infty}E_{\geq\theta/2,n}\right)=0.
\end{align*}
Thus, one obtains from the definition of Hausdorff dimension that:
\begin{align*}
    \dim\left(\limsup_{n\to+\infty}E_{\geq\theta/2,n}\right)\leq s.
\end{align*}
Since $s\in(1/\rho,1)$ is arbitrary, the desired upper bound is obtained:
\begin{align*}
    \dim{E_{\geq\theta,+}}\leq \dim\left(\limsup_{n\to+\infty}E_{\geq\theta/2,n}\right)\leq \frac{1}{\rho}.
\end{align*}
\end{proof}

\begin{proof}[Proof of Theorem~\ref{thm:linear}]
    By combining Propositions~\ref{prop: thm 1 prop 1},~\ref{prop: thm 1 prop 3}, and~\ref{prop: thm 1 prop 4}, the proof of Theorem~\ref{thm:linear} is completed.
\end{proof}

\section{Proof of Theorem~\ref{thm:sublinear}}

Let $f:\mathbb{N}\to\mathbb{R}^+$ be an admissible function. The upper bound $\dim{E_f}\leq 1$ is trivially true; hence, it remains to prove the lower bound $\dim{E_f}\geq 1$. Without loss of generality, as $E_f=E_{\lfloor f\rfloor}$, $f$ is integer-valued. Without loss of generality, after permuting the partition intervals and relabelling the digits accordingly if necessary, the probability sequence $(p_k)_{k\in\mathbb{N}}$ is non-increasing. 

Define, $s_1\coloneqq0$ and for any $K\in\mathbb{N}\setminus\{1\}$, $s_K\in(0,1)$ to be the unique solution of
\begin{align}\label{eq:sM}
    \sum_{k=1}^K {p_k}^{s_K}=1.
\end{align}
\begin{prop}
    For any $0<t<1$, there exists a sequence $(K_n)_{n\in\mathbb{N}}$ of positive integers such that all of the following are satisfied
    \begin{enumerate}
        \item $(K_n)_{n\in\mathbb{N}}$ is non-decreasing and unbounded;
        \item
        for any $n\in\mathbb{N}$,
        \begin{align}\label{eq:sjgt}
            s_{K_n}\geq \frac{1+t}{2};
        \end{align}
        \item 
        there exists $n_t\in\mathbb{N}$ such that for any $n\in\mathbb{N}$, if $n\geq n_t$ then:
        \begin{align}\label{eq: K_n leq f(n)}
            K_n\leq \sqrt{f(n)}.
        \end{align}
    \end{enumerate}
\end{prop}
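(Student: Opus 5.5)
The plan is to first show that $s_K\to 1$ as $K\to+\infty$, and then to build $(K_n)_{n\in\mathbb{N}}$ by slowly increasing a truncation level once $\sqrt{f(n)}$ has grown enough.

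\textbf{Step 1: monotonicity of $s_K$.} For fixed $K\geq2$, the map $s\mapsto\sum_{k=1}^K p_k^{s}$ is continuous and strictly decreasing. At $s=0$ it equals $K>1$, and at $s=1$ it equals $\sum_{k\le K}p_k<1$. So $s_K\in(0,1)$ is well defined. Moreover, $\sum_{k=1}^{K+1}p_k^{s_K}>1$, which forces $s_{K+1}>s_K$. Hence $(s_K)$ is non-decreasing with some limit $s_\infty\le 1$.

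\textbf{Step 2: $s_\infty=1$.} Suppose instead $s_\infty<1$. Then for every $K$, $\sum_{k=1}^K p_k^{s_\infty}\le\sum_{k=1}^K p_k^{s_K}=1$, since $p_k<1$ and $s_K\le s_\infty$. Letting $K\to\infty$ gives $\sum_{k} p_k^{s_\infty}\le1$. Choose $s'\in(s_\infty,1)$, and if $\rho>1$ also require $s'>1/\rho$. Then $\sum_k p_k^{s_\infty}>\sum_k p_k^{s'}\ge\sum_k p_k=1$, where the strict inequality holds because some $p_k<1$ and infinitely many terms are positive. This is a contradiction. (If $\rho=1$ or the sum diverges, the contradiction is immediate.) In fact only $p_k^{s}>p_k$ for $s<1$ is needed: $\sum_k p_k^{s_\infty}>\sum_k p_k=1$. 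Regular variation plays no role here beyond $p_k>0$ and $\sum_k p_k=1$. Consequently, for the given $t\in(0,1)$ there is $K_*$ such that $s_K\ge(1+t)/2$ for all $K\ge K_*$.

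\textbf{Step 3: the construction.} Define
\begin{align*}
K_n\coloneqq\max\{K_*,\lfloor\sqrt{f(n)}\rfloor\}.
\end{align*}
Each required property then follows directly.
\begin{itemize}
\item Monotonicity: $f$ is non-decreasing by the increment condition in Definition~\ref{def:admissible}, so $K_n$ is non-decreasing.
\item Unboundedness: $f$ is unbounded, so $K_n$ is unbounded.
\item Condition~\eqref{eq:sjgt}: $K_n\ge K_*$, so $s_{K_n}\ge(1+t)/2$ by Step~2 together with the monotonicity of $(s_K)$ from Step~1.
\item Condition~\eqref{eq: K_n leq f(n)}: since $f(n)\to\infty$, there is $n_t$ with $\sqrt{f(n)}\ge K_*$ for all $n\ge n_t$. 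Then $K_n=\lfloor\sqrt{f(n)}\rfloor\le\sqrt{f(n)}$.
\end{itemize}

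The only substantive point is the limit $s_K\to1$ in Step~2, which rests on the strict inequality $\sum_k p_k^{s}>1$ for every $s<1$.
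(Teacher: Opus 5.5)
Your proof is correct and follows the paper's argument. The paper uses the same truncation $K_n=\max\{K_t^*,\lfloor\sqrt{f(n)}\rfloor\}$, chooses $n_t$ so that $\sqrt{f(n)}\geq K_t^*$, and verifies the three properties exactly as you do; your Steps~1--2 only spell out the monotonicity of $(s_K)$ and the limit $s_K\uparrow 1$, which the paper asserts in one line (the detour through $s'$ and $1/\rho$ is unnecessary, since $p_k^{s_\infty}>p_k$ termwise already gives the contradiction, as you note).
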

\begin{proof}
    Pick any $0<t<1$. 
    Since the partial sums $\sum_{k=1}^K p_k$ increase to 1 as $K\to+\infty$,  $(s_K)_{K\in\mathbb{N}}$ also increases to 1. There exists $K_t^*\in\mathbb{N}$ such that for any $K\in\mathbb{N}$, if $K\geq K_t^*$ then~\eqref{eq:sjgt} is satisfied. Define, for any $n\in\mathbb{N}$,
    \begin{align*}
        K_n\coloneqq\max{\left\{K_t^*,\left\lfloor\sqrt{f(n)}\right\rfloor\right\}}.
    \end{align*}
    Since $f$ is non-decreasing and $\lim_{n\to+\infty} f(n)=+\infty$, the sequence $(K_n)_{n\in\mathbb{N}}$ is non-decreasing and unbounded. Define 
    \begin{align*}
        n_t\coloneqq\min{\left\{n\in\mathbb{N}:K_t^*\leq \sqrt{f(n)}\right\}}.
    \end{align*}
    For any $n\in\mathbb{N}$, if $n\geq n_t$ then~\eqref{eq: K_n leq f(n)} is satisfied.
\end{proof}

Define the new-digit times set $\mathcal{T}$ by:
\begin{align*}
    \mathcal{T} \coloneqq \left\{n\in\mathbb{N}: f(n)=f(n-1)+1\right\},
\quad
\end{align*}
with the convention $f(0)\coloneqq0$.
Define the forced-digit pools $(\mathcal{P}_n)_{n\in\mathbb{N}}$ by, for any $n\in\mathbb{N}$,
\begin{align*}
    \mathcal{P}_n \coloneqq
    \begin{cases}
        \{K_n+f(n)\}, & n\in\mathcal{T}, \\
        \emptyset,    & n\in\mathbb{N}\setminus\mathcal{T}.
    \end{cases}
\end{align*}
Note that $(\mathcal{P}_n)_{n\in\mathbb{N}}$ is pairwise disjoint.
By~\eqref{eq: K_n leq f(n)}, one obtains that for any $n\in\mathbb{N}$, if $n\geq n_t$ then:
\begin{align}\label{eq:forcedsize}
    \max{\mathcal{P}_n}\leq2f(n),
\end{align}
with the convention $\max{\emptyset}=0$. Define the free alphabets $(\mathcal{A}_n)_{n\in\mathbb{N}}$ by, for any $n\in\mathbb{N}$,
\begin{align*}
    \mathcal{A}_n\coloneqq \{1,2,\ldots,K_n\}.
\end{align*}

Define the class of admissible sequences $(\mathcal{B}_n)_{n\in\mathbb{N}}$ by, for any $n\in\mathbb{N}$,
\begin{align*}
    \mathcal{B}_n \coloneqq
    \begin{cases}
        \mathcal{P}_n, & n\in\mathcal{T}, \\
        \mathcal{A}_n, & n\in\mathbb{N}\setminus\mathcal{T}.
    \end{cases}
\end{align*}
Let $F_{f,t}$ be the set of all real numbers in the unit interval whose digit sequence is a concatenation of digits from $(\mathcal{B}_n)_{n\in\mathbb{N}}$; that is,
\begin{align*}
    F_{f,t}
    \coloneqq
    \bigcap_{n\in\mathbb{N}}\left\{x\in[0,1):d_{n}(x)\in\mathcal{B}_n\right\},
\end{align*}

\begin{prop}\label{prop: F_t is a subset of E_f}
    For any $0<t<1$ and admissible function $f$, $F_{f,t}\subset E_f$.
\end{prop}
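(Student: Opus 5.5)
The plan is a direct two-sided counting of $D_n(x)$ for a fixed $x\in F_{f,t}$. The goal is to show
\begin{align*}
    f(n)\leq D_n(x)\leq f(n)+K_n
\end{align*}
for all sufficiently large $n$. Combined with~\eqref{eq: K_n leq f(n)} and the unboundedness of $f$, this gives $1\leq D_n(x)/f(n)\leq 1+f(n)^{-1/2}\to 1$, hence $x\in E_f$.

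\emph{Lower bound.} Split the times $1,\ldots,n$ into forced times $\mathcal{T}\cap\{1,\ldots,n\}$ and free times. Since $f$ is integer-valued with $f(0)=0$ and increments in $\{0,1\}$, we have
\begin{align*}
    \#(\mathcal{T}\cap\{1,\ldots,n\})=f(n).
\end{align*}
At a forced time $m\in\mathcal{T}$ the digit is $d_m(x)=K_m+f(m)$. For $m<m'$ both in $\mathcal{T}$, $f(m)<f(m')$ and $K_m\leq K_{m'}$, so the forced digits are strictly increasing and pairwise distinct. Hence $D_n(x)\geq f(n)$. This step needs only the monotonicity of $(K_n)$ from the preceding proposition.

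\emph{Upper bound.} Every free digit at a time $m\leq n$ lies in $\mathcal{A}_m=\{1,\ldots,K_m\}\subset\{1,\ldots,K_n\}$, again by monotonicity of $(K_n)$. So the free times contribute at most $K_n$ values not already counted among the forced digits, and $D_n(x)\leq f(n)+K_n$. For $n\geq n_t$,~\eqref{eq: K_n leq f(n)} gives $K_n\leq\sqrt{f(n)}$. Dividing by $f(n)$, which is positive for large $n$ since $f$ is unbounded and non-decreasing, yields the claim.

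The only delicate point is that free digits may coincide with earlier forced digits, or may themselves be new symbols. The upper bound handles this crudely by counting all of $\{1,\ldots,K_n\}$, and the lower bound ignores free digits entirely, so no disjointness between $\mathcal{A}_m$ and the pools $\mathcal{P}_{m'}$ is needed. The main thing to check carefully is the identity $\#(\mathcal{T}\cap\{1,\ldots,n\})=f(n)$. This relies on the reduction to integer-valued $f$ (via $E_f=E_{\lfloor f\rfloor}$) together with the convention $f(0)=0$. If $f(1)$ could exceed $1$ after the reduction, I would instead use a finite modification of $f$ at small $n$; this does not affect the limit.
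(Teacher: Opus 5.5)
Your proof is correct and is essentially the paper's own argument: the same sandwich $f(n)\leq D_n(x)\leq f(n)+K_n$, obtained from the pairwise distinct forced digits $K_m+f(m)$ and the free digits lying in $\{1,\ldots,K_n\}$, followed by $K_n\leq\sqrt{f(n)}$ for $n\geq n_t$. Your caveat about $f(1)$ is a fair point that the paper passes over, but no modification of $f$ is needed: in general $\#(\mathcal{T}\cap\{1,\ldots,n\})$ differs from $f(n)$ by at most the constant $f(1)$, and this does not affect $D_n(x)/f(n)\to 1$.
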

\begin{proof}
    Pick any $x\in F_{f,t}$ and $n\in\mathbb{N}$. One has the disjoint union:
    \begin{align*}
        \{1,\ldots,n\}
        =\left(\mathcal{T}\cap\{1,\ldots,n\}\right)\cup\left(\{1,\ldots,n\}\setminus\mathcal{T}\right).
    \end{align*}

    Pick any $j\in\{1,2,\ldots,n\}\cap\mathcal{T}$. By the construction of $F_{f,t}$,  the $j$-th digit of $x$ is forced to be:
    \begin{align*}
        d_j(x)=b_j\coloneqq K_j+f(j)\in \mathcal{P}_j.
    \end{align*}
    The value taken is genuinely new. Since $(K_n)_{n\in\mathbb{N}}$ is non-decreasing, for any $i\in\{1,2,\ldots,j-1\}\setminus\mathcal{T}$, $b_j>K_j\geq K_i\geq d_i(x)$.
    Since $(\mathcal{P}_n)_{n\in\mathcal{T}}$ is pairwise disjoint, one obtains that for any $i\in\{1,2,\ldots,j-1\}\cap\mathcal{T}$, $b_j\ne b_i$.
    Thus,
    \begin{align*}
        D_n(x)
        \geq
        \#\left(\mathcal{T}\cap\{1,\ldots,n\}\right)
        =\sum_{i=1}^n\left(f(i)-f(i-1)\right)
        =f(n).
    \end{align*}

    On the other hand, for any $i\in\{1,2,\ldots,n\}\setminus\mathcal{T}$, then the $i$-th digit of $x$ is free to choose from:
    \begin{align*}
        d_i(x)\in\mathcal{A}_i=\{1,2,\ldots,K_i\}.
    \end{align*}
    Since $(K_n)_{n\in\mathbb{N}}$ is non-decreasing, one obtains:
    \begin{align*}
        \bigcup_{i=1}^n \mathcal{A}_i=\mathcal{A}_n=\{1,\ldots,K_n\}.
    \end{align*}
    Thus, the first $n$-th digits take values from $\{b_i:i\in\{1,\ldots,n\}\cap\mathcal{T}\}\cup\mathcal{A}_n$ and:
    \begin{align*}
        D_n(x)
        \leq \#\{b_i:i\in\{1,\ldots,n\}\cap\mathcal{T}\}+\#{\mathcal{A}_n}
        =
        \#\left(\mathcal{T}\cap\{1,\ldots,n\}\right)+\#{\mathcal{A}_n} 
        =f(n)+K_n.
    \end{align*}

    By combining the last inequalities of both previous paragraphs, for any $n\in\mathbb{N}$,
    \begin{align*}
        f(n)\leq D_n(x) \leq f(n)+K_n.
    \end{align*}
    By~\eqref{eq: K_n leq f(n)}, one obtains that for any $n\in\mathbb{N}$, if $n\geq n_t$ then: 
    \begin{align*}
        1\leq\frac{D_n(x)}{f(n)}\leq 1+\frac{K_n}{f(n)}\leq1+\frac{1}{\sqrt{f(n)}}.
    \end{align*}
    Since $f$ is unbounded, $\lim_{n\to\infty} D_n(x)/f(n)=1$ follows and $x\in E_f$.
\end{proof}

Define the coding space $\Omega$ by:
\begin{align*}
    \Omega\coloneqq\prod_{n\in\mathbb{N}}\mathcal{B}_n,
\end{align*}
and the coding map $\Phi:\Omega\to[0,1)$ by, for any $(d_n)_{n\in\mathbb{N}}\in\Omega$,
\begin{align*}
    \Phi\left((d_n)_{n\in\mathbb{N}}\right)\coloneqq\bigcap_{n\in\mathbb{N}}C_n(d_1,d_2,\ldots,d_n).
\end{align*}
Define, for any $n\in\mathbb{N}$, $s_n\coloneqq s_{K_n}$ be the unique solution of~\eqref{eq:sM} with $K=K_n$. Define, for any $n\in\mathbb{N}$, a probability measure $\nu_{t,n}$ on $\mathcal{B}_n$ by:
\begin{align*}
    \nu_{t,n}\coloneqq
    \begin{cases}
    \displaystyle \sum_{k=1}^{K_n}{p_k}^{s_n}\delta_k, & n\in\mathbb{N}\setminus{\mathcal{T} },\\
    \delta_{b_n}, & n\in \mathcal{T} ,
    \end{cases}
\end{align*}
where $\delta$ denotes the Dirac probability measure, and $b_n\coloneqq K_n+f(n)$ denotes the unique forced digit in $\mathcal B_n$. 
Define $\nu_t\coloneqq\bigotimes_{n\in\mathbb{N}}\nu_{t,n}$ and a Borel probability measure $\mu_t$ supported on $F_{f,t}$ by the push-forward:
\begin{align*}
    \mu_t\coloneqq\Phi_*\nu_t.
\end{align*}

\begin{prop}\label{prop: length-cylinder}
    For any $0<t<1$, there exist $A_t>0$ and $c_t>0$ such that for any $n\in\mathbb{N}$,
    \begin{align*}
        \sup_{x\in F_{f,t}}\frac{\mu_t(C_n(x))}{\left({\operatorname{diam}{C_n(x)}}\right)^t}
        \leq {A_t}{e^{-c_tn}},
    \end{align*}
    where for any $x\in[0,1)$ and $n\in\mathbb{N}$, $C_n(x)$ is that rank-$n$ cylinder containing $x$; that is:
    \begin{align*}
        C_n(x)\coloneqq C\left(d_1(x),d_2(x),\ldots,d_n(x)\right)
    \end{align*}
\end{prop}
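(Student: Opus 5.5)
Since both $\mu_t$ and the diameter factorise over coordinates, I will compute the ratio as a product and bound it factor by factor. For $x\in F_{f,t}$ with digits $d_1,\ldots,d_n$, the product structure of $\nu_t$ gives $\mu_t(C_n(x))=\prod_{i=1}^n\nu_{t,i}(\{d_i\})$, and the affine full-branch structure gives $\operatorname{diam}C_n(x)=\prod_{i=1}^n p_{d_i}$. Hence
\begin{align*}
    \frac{\mu_t(C_n(x))}{(\operatorname{diam}C_n(x))^t}
    =\prod_{i\in\{1,\ldots,n\}\setminus\mathcal{T}}{p_{d_i}}^{s_i-t}\cdot\prod_{i\in\{1,\ldots,n\}\cap\mathcal{T}}{p_{b_i}}^{-t}.
\end{align*}
The two products will be treated separately: the free times should give exponential decay, and the forced times a subexponential loss.

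\emph{Free times.} For $i\notin\mathcal{T}$, \eqref{eq:sjgt} gives $s_i-t\geq(1-t)/2>0$, and $p_{d_i}\leq p_*\coloneqq\sup_k p_k<1$. Each such factor is therefore at most ${p_*}^{(1-t)/2}$, so the first product is at most $\exp\bigl(-\tfrac{1-t}{2}\log(1/p_*)\,\#(\{1,\ldots,n\}\setminus\mathcal{T})\bigr)$. Since $\#(\mathcal{T}\cap\{1,\ldots,n\})=f(n)=o(n)$ by admissibility, the number of free times is at least $n-f(n)\geq n/2$ for large $n$. This gives decay $e^{-c'n}$ with $c'=\tfrac{1-t}{4}\log(1/p_*)$.

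\emph{Forced times (the main obstacle).} For $i\in\mathcal{T}$ we have $b_i=K_i+f(i)$, and by \eqref{eq:forcedsize}, $b_i\leq 2f(i)\leq 2f(n)$ once $i\geq n_t$. Since the sequence is non-increasing after relabelling, $p_{b_i}\geq p_{\lfloor 2f(n)\rfloor}$. Regular variation together with the Potter bound (as in Lemma~\ref{lem:tailpk}, $L(k)\geq k^{-1}/B$) gives $p_k\geq c\,k^{-\rho-1}$ for all $k$; monotone relabelling only preserves a lower bound of this polynomial type, since the relabelled sequence still has $p_k\geq \tilde c k^{-\rho-1}$, and I would verify this briefly.

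It follows that
\begin{align*}
    \prod_{i\in\mathcal{T},\,i\leq n}{p_{b_i}}^{-t}\leq \exp\bigl(t f(n)\bigl((\rho+1)\log(2f(n))+\log(1/\tilde c)\bigr)\bigr)\cdot A',
\end{align*}
where $A'$ absorbs the finitely many $i<n_t$. The exponent is $O(f(n)\log f(n))=o(n)$ by the admissibility condition $f(n)\log f(n)/n\to0$. For $n$ large, this exponent is therefore at most $c'n/2$.

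Combining the two bounds yields the ratio $\leq A_t e^{-c_t n}$ with $c_t=c'/2$, where $A_t$ is chosen to cover the finitely many small $n$ (each ratio there is finite because the digits range over finite sets $\mathcal{B}_i$). The delicate point is exactly the forced-digit lower bound on $p_{b_i}$, which is where the $f\log f=o(n)$ hypothesis is used.
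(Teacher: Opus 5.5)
Your proposal is correct and follows essentially the same route as the paper. You factor the ratio over free and forced positions, get exponential decay ${p_*}^{(1-t)(n-f(n))/2}$ at free times from $s_i-t\ge(1-t)/2$, and bound the forced-time loss by $\exp(O(f(n)\log f(n)))=e^{o(n)}$ using $b_i\le 2f(n)$, a polynomial lower bound on $p_k$ from regular variation and Potter, and admissibility. Two of your details are slightly more careful than the paper's write-up: absorbing the finitely many forced times $i<n_t$ into a constant, and noting that a polynomial lower bound survives the non-increasing rearrangement (the $k$-th largest value is at least $\min_{j\le k}p_j\ge c\,k^{-\rho-1}$), whereas the paper simply applies \eqref{eq:rv} to the relabelled sequence.
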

\begin{proof}
    Define, for any $n\in\mathbb{N}$, $U_n\coloneqq \{1\leq i\leq n: i\in\mathbb{N}\setminus \mathcal{T} \}$ to be the set of free positions up to $n$, and $V_n\coloneqq \{1\leq i\leq n: i\in \mathcal{T} \}$ to be the set of the forced positions up to $n$. Pick any $x\in F_{f,t}$. 
    By the construction of $\mu_t$, one obtains for any $n\in\mathbb{N}$,
    \begin{align*}
        \mu_t(C_n(x))=\prod_{i\in U_n} {p_{d_i(x)}}^{\,s_{i}}, &&
        \operatorname{diam}{C_n(x)}
        =\left(\prod_{i\in U_n} p_{d_i(x)}\right) \left(\prod_{i\in V_n} p_{d_i(x)}\right);
    \end{align*}
    hence,
    \begin{align}\label{eq:ratio}
        \frac{\mu_t(C_n(x))}{\left(\operatorname{diam}{C_n(x)}\right)^t}
        =\left(\prod_{i\in U_n} {p_{d_i(x)}}^{s_i-t}\right)
        \left(\prod_{i\in V_n} {p_{d_i(x)}}^{-t}\right).
    \end{align}

    By~\eqref{eq:sjgt}, for any $i\in\mathbb{N}$, $s_i-t\geq (1-t)/2$. Since $(p_k)_{k\in\mathbb{N}}$ is non-increasing, for any $n\in\mathbb{N}$, $i\in U_n$ and $x\in F_{f,t}$,
    \begin{align*}
        {p_{d_i(x)}}^{s_i-t}\leq {p_1}^{s_i-t}\leq {p_1}^{(1-t)/2}.
    \end{align*}
    By multiplying over $i\in U_n$, one obtains:
    \begin{align*}
        \prod_{i\in U_n} {p_{d_i(x)}}^{s_i-t}
        \leq \prod_{i\in U_n} {p_1}^{(1-t)/2}
        = {p_1}^{\,(1-t)\#U_n/2}
        = \exp{\left(\frac{(1-t)\log p_1}{2}\#U_n\right)}.
    \end{align*}
    Note that for any $n\in\mathbb{N}$, $\#U_n=n-\#V_n=n-f(n)$ and: 
    \begin{align}\label{eq:free_factor_bound}
        \prod_{i\in U_n} {p_{d_i(x)}}^{s_i-t}
        \leq \exp{\left(\frac{(1-t)\log p_1}{2}(n-f(n))\right)}.
    \end{align}

    Note that for any $n\in\mathbb{N}$ and $i\in V_n$, $d_i(x)\in\mathcal{P}$ is a forced digit. By~\eqref{eq:forcedsize} and $f$ being non-decreasing, for any $n\geq n_t$ and $i\in V_n$,
    \begin{align*}
        d_i(x)\leq 2f(i)\leq 2f(n).
    \end{align*}
    Since $(p_k)_{k\in\mathbb{N}}$ is non-increasing, one obtains:
    \begin{align*}
        -\log p_{d_i(x)} \leq -\log p_{2f(n)}.
    \end{align*}
    By summing over $i\in V_n$ and $\#V_n=f(n)$,
    \begin{align}\label{eq:forced_sum_reduce}
        -\sum_{i\in V_n} \log p_{d_i(x)}
        \leq-\#V_n \log p_{2f(n)}
        = -f(n)\log{p_{2f(n)}},
    \end{align}
    Pick any $\alpha>0$. By regular variation~\eqref{eq:rv} and the Potter bound for the slowly varying function \cite[Theorem 1.5.6]{BinghamGoldieTeugels1987}, there exists $K_\alpha\in\mathbb{N}$ such that for any $k\geq K_\alpha$,
    \begin{align*}
        p_k \geq \frac12 k^{-\rho}L(k)\geq \frac12 k^{-(\rho+\alpha)};
    \end{align*}
    hence, for any $k\geq K_\alpha$,
    \begin{align}\label{eq:logpk_upper}
        -\log p_k \leq (\rho+\alpha)\log k + \log 2.
    \end{align}
    Since $f$ is non-decreasing and unbounded, there exists $N_\alpha\in\mathbb{N}$ such that for any $n\in\mathbb{N}$, if $n\geq N_\alpha$ then $2f(n)\geq \max{\{K_\alpha,2\}}$. By applying~\eqref{eq:logpk_upper} with $k=2f(n)$, one obtains that for any $n\geq N_\alpha$,
    \begin{align*}
        -\log{p_{2f(n)}}
        \leq (\rho+\alpha)\log{(2f(n))}+\log 2
        \leq 2(\rho+\alpha)\log{f(n)}+\log 2.
    \end{align*}
    From~\eqref{eq:forced_sum_reduce}, one obtains that for any $n\in\mathbb{N}$, if $n\geq N_\alpha$ then:
    \begin{align*}
        - \sum_{i\in V_n}\log p_{d_i(x)}
        \leq f(n)\left(2(\rho+\alpha)\log{f(n)} + \log{2}\right).
    \end{align*}
    By the admissibility of $f$, there exists $n_\alpha\in\mathbb{N}$ such that for any $n\in\mathbb{N}$, if $n\geq n_\alpha$ then:
    \begin{align*}
        f(n)\left(2(\rho+\alpha)\log{f(n)} + \log{2}\right)
        \leq -\frac{(1-t)\log p_1}{3t}n.
    \end{align*}
    By combining the two inequalities above, one obtains that for any $n\in\mathbb{N}$, if $n\geq \max{\{N_\alpha,n_\alpha\}}$ then:
    \begin{align*}
        -\sum_{i\in V_n} \log p_{d_i(x)} \leq -\frac{(1-t)\log p_1}{3t}n;
    \end{align*}
    therefore,
    \begin{align}\label{eq:forced_factor_bound}
        \prod_{i\in V_n} {p_{d_i(x)}}^{-t}
        = \exp{\left(-t\sum_{i\in V_n}\log p_{d_i(x)}\right)}
        \leq \exp\!\left(-\frac{(1-t)\log p_1}{3}\,n\right).
    \end{align}
    
    By combining~\eqref{eq:ratio},~\eqref{eq:free_factor_bound}, and~\eqref{eq:forced_factor_bound}, for any $n\in\mathbb{N}$, if $n\geq \max{\{N_\alpha,n_\alpha\}}$ then:
    \begin{align*}
    \frac{\mu_t(C_n(x))}{(\operatorname{diam}C_n(x))^t}
    &\leq
    \exp{\left(\frac{(1-t)\log p_1}{2}(n-f(n))-\frac{(1-t)\log p_1}{3}n\right)}
    \\
    &=
    \exp{\left((1-t)\log p_1\left(\frac{n}{6}-\frac{f(n)}{2}\right)\right)},
    \end{align*}
    By the admissibility of $f$, there exists $n_1\in\mathbb{N}$ such that for any $n\in\mathbb{N}$, if $n\geq n_1$ then $f(n)\leq n/4$. Therefore, for any $n\in\mathbb{N}$ and $x\in F_{f,t}$, if $n\geq n^*\coloneqq\max{\{N_\alpha,n_\alpha,n_1\}}$ then:
    \begin{align*}
    \frac{\mu_t(C_n(x))}{(\operatorname{diam}C_n(x))^t}
    \leq
    \exp{\left(\frac{(1-t)\log p_1}{24}\,n\right)}.
    \end{align*}
    The proof is completed by taking:
    \begin{align*}
        c_t\coloneqq-\frac{(1-t)\log p_1}{24}>0, &&
        A_t\coloneqq\max{\left\{1,\max_{1\leq n\leq n^*}{\sup_{x\in F_{f,t}}{\frac{e^{c_tn}\mu_t(C_n(x))}{(\operatorname{diam}C_n(x))^t}}}\right\}}.
    \end{align*}
\end{proof}

\begin{proof}[Proof of Theorem~\ref{thm:sublinear}]
Since $E_f\subset[0,1)$, $\dim E_f\leq 1$; hence, it remains to prove $\dim E_f\geq 1$.

Pick any $0<t<1$. Define, for any interval $I\subset[0,1)$, the stopping time $\tau_I:I\to\mathbb{N}$ by, for any $x\in I$:
\begin{align*}
    \tau_I(x)\coloneqq\min{\{n\in\mathbb{N}:C_n(x)\subset I\}};
\end{align*}
and a sequence of covers of pairwise disjoint maximal cylinders $(\mathcal{C}_n(I))_{n\in\mathbb{N}}$ by, for any $n\in\mathbb{N}$,
\begin{align*}
    \mathcal{C}_n(I)
    \coloneqq\left\{C_{\tau_I(x)}(x)\in[0,1):x\in I\cap F_{f,t}\text{ and }\tau_I(x)=n\right\}.
\end{align*}
Note that $\bigcup_{n\in\mathbb{N}}\mathcal{C}_n(I)$  is a cover of $I\cap F_{f,t}$. By Proposition~\ref{prop: length-cylinder}, for any interval $I\subset[0,1)$,
\begin{align*}
    \mu_t(I)
    \leq \sum_{n\in\mathbb{N}}\sum_{C\in\mathcal{C}_n(I)}\mu_t(C)
    \leq A_t\,(\operatorname{diam}{I})^t\sum_{n\in\mathbb{N}}\frac{\#\mathcal{C}_n(I)}{e^{c_tn}}.
\end{align*}

By maximality, for any $n\in\mathbb{N}$ and $C\in \mathcal{C}_n(I)$, $C$ can only arise as children at time $n$ of the two rank $n-1$ boundary cylinders determined by the endpoints of $I$. For any $n\in\mathbb{N}$, the $n$-digit alphabet in the construction has size most $K_n+1$; hence $\#\mathcal{C}_n(I)\leq 2(K_n+1)$ and
\begin{align*}
    \mu_t(I)\leq 2A_t\,(\operatorname{diam}{I})^t\sum_{n\in\mathbb{N}}\frac{K_n+1}{e^{c_tn}}.
\end{align*}
By~\eqref{eq: K_n leq f(n)} and the admissibility of $f$,
\begin{align*}
    \sum_{n\in\mathbb{N}}\frac{K_n+1}{e^{c_tn}}
    \leq \sum_{n=1}^{n_t-1}\frac{K_n+1}{e^{c_tn}}+\sum_{n=n_t}^{+\infty}\frac{\sqrt{f(n)}+1}{e^{c_tn}}
    \leq \sum_{n=1}^{n_t-1}\frac{K_n+1}{e^{c_tn}}+\sum_{n=n_t}^{+\infty}\frac{\sqrt{n}+1}{e^{c_tn}}<+\infty.
\end{align*}
By applying \cite[Mass distribution principle~4.2]{Falconer2003FractalGeometry}, one obtains that for any $t\in(0,1)$, $\dim F_{f,t}\geq t$.
By Proposition~\ref{prop: F_t is a subset of E_f} and monotonicity of dimension, one obtains that $\dim E_f\geq 1$. 
\end{proof}

\bibliographystyle{siam} 
\bibliography{name}

\end{document}

%% file: gauss_luroth_maps.tex
\begin{figure}[ht]
\centering
\begin{minipage}{.49\textwidth}
\centering
\begin{tikzpicture}
\begin{axis}[
    axis lines = left,
    axis x line=center,
    axis y line=center,
    width=1.1\textwidth,
    height=1.1\textwidth,
    xmin=-.075,
    xmax=1.15,
    ymin=-.075,
    ymax=1.15,
    xtick={1/3,1/2,1},
    xticklabels={$1/3$,$1/2$,1},
    ytick={1},
    xlabel = \(x\),
    ylabel = \(1/x\mod1\),
    axis equal,
]
\draw[dotted] (  0,1) -- (  1,1); 
\draw[dotted] (  1,0) -- (  1,1); 
\foreach \d in {1, 2, ..., 30} {
    \pgfmathtruncatemacro{\Nsamples}{max(8,ceil(300/\d))}
    \addplot[smooth, domain={1/(\d+1):1/\d}, samples=\Nsamples] {1/x-\d};
}
\draw[dotted] (1/2,0) -- (1/2,1); 
\draw[dotted] (1/3,0) -- (1/3,1); 
\draw[dotted] (1/4,0) -- (1/4,1); 
\draw[dotted] (1/5,0) -- (1/5,1); 
\draw[dotted] (1/6,0) -- (1/6,1); 
\draw[dotted] (1/7,0) -- (1/7,1); 
\draw[dotted] (1/8,0) -- (1/8,1); 
\draw[dotted] (1/9,0) -- (1/9,1); 
\end{axis}
\end{tikzpicture}
\end{minipage}
\begin{minipage}{.49\textwidth}
\centering
\begin{tikzpicture}
\begin{axis}[
    axis lines = left,
    axis x line=center,
    axis y line=center,
    width=1.1\textwidth,
    height=1.1\textwidth,
    xmin=-.075,
    xmax=1.15,
    ymin=-.075,
    ymax=1.15,
    xtick={1,1/2,1/3},
    xticklabels={1,$1/2$,$1/3$},
    ytick={1},
    xlabel = \(x\),
    ylabel = \(T(x)\),
    axis equal,
]
\draw[dotted] (  0,1) -- (  1,1);
\foreach \d in {2, 3, ..., 31} {
    \addplot[domain={1/\d:1/(\d-1)}, samples=2] {(\d-1)*(\d*x-1)};
}
\draw[dotted] (1/2,0) -- (1/2,1); 
\draw[dotted] (1/3,0) -- (1/3,1); 
\draw[dotted] (1/4,0) -- (1/4,1); 
\draw[dotted] (1/5,0) -- (1/5,1); 
\draw[dotted] (1/6,0) -- (1/6,1); 
\draw[dotted] (1/7,0) -- (1/7,1); 
\draw[dotted] (1/8,0) -- (1/8,1); 
\draw[dotted] (1/9,0) -- (1/9,1); 
\draw[dotted] (  1,0) -- (  1,1); 
\end{axis}
\end{tikzpicture}
\end{minipage}
\caption{Gauss map and L\"uroth map}
\label{fig: both maps}
\end{figure}
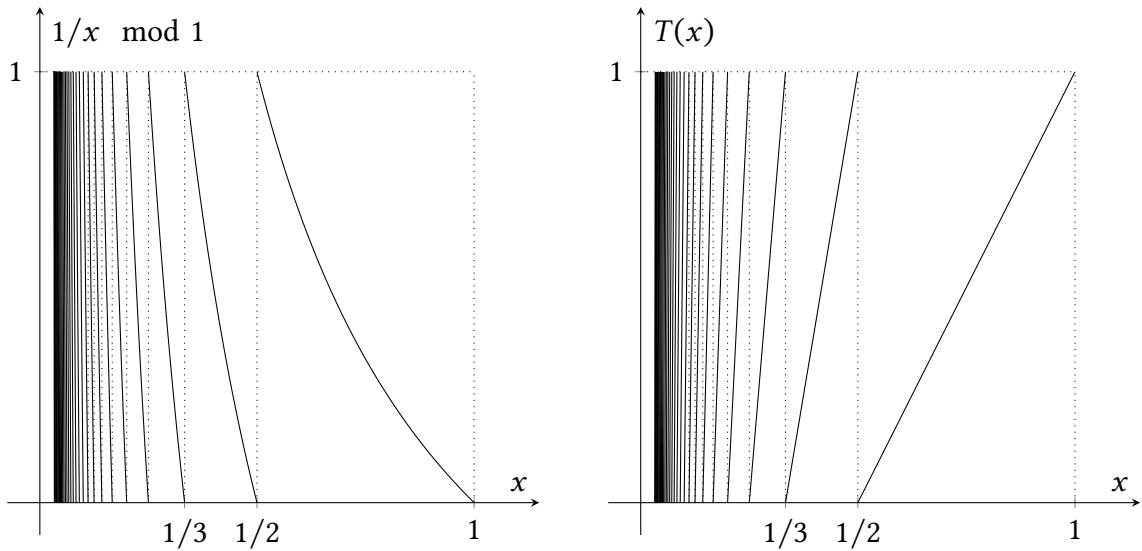